\patchcommand\@starttoc{\begin{quote}}{\end{quote}}
\setlist[description]{style=multiline,topsep=4pt,align=parright}
\let\reftagform@=\tagform@
\def\tagform@#1{\maketag@@@{(\ignorespaces\textcolor{black}{#1}\unskip\@@italiccorr)}}
\newcommand{\iref}[1]{\textup{\reftagform@{\tcr{\ref{#1}}}}}
\newcommand*{\mcap}{\mathbin{\scalebox{1.5}{\ensuremath{\cap}}}}%
\titlespacing\section{0pt}{11pt plus 4pt minus 2pt}{6pt plus 2pt minus 2pt}
\titlespacing\subsection{0pt}{10pt plus 4pt minus 2pt}{4pt plus 2pt minus 2pt}
\titlespacing\subsubsection{0pt}{8pt plus 4pt minus 2pt}{4pt plus 2pt minus 2pt}
\titlespacing\paragraph{0pt}{6pt plus 4pt minus 2pt}{6pt plus 2pt minus 2pt}
\begin{document}

	\setlength{\abovedisplayskip}{4.5pt}
	\setlength{\belowdisplayskip}{4pt}

\title{The Fun is Finite: Douglas--Rachford and Sudoku Puzzle --- Finite Termination and Local Linear Convergence}
\author{{Robert Tovey\thanks{INRIA Paris, France. E-mail: robert.tovey@inria.fr}~~~~~~~and~~~~
		Jingwei Liang\thanks{Queen Mary University of London, UK. E-mail: jl993@cam.ac.uk.}} 
		}
\date{}
\maketitle

\begin{abstract}
In recent years, the Douglas--Rachford splitting method has been shown to be effective at solving many non-convex optimization problems. In this paper we present a local convergence analysis for non-convex feasibility problems and show that both finite termination and local linear convergence are obtained.
For a generalization of the Sudoku puzzle, we prove that the local linear rate of convergence of Douglas--Rachford is exactly $\frac{\sqrt{5}}{5}$ and independent of puzzle size. For the $s$-queens problem we prove that Douglas--Rachford converges after a finite number of iterations.
Numerical results on solving Sudoku puzzles and $s$-queens puzzles are provided to support our theoretical findings. 
\end{abstract}

\begin{keywords}
Douglas--Rachford $\cdot$ Feasibility problem $\cdot$ Sudoku Puzzle $\cdot$ Finite Termination \and Local Linear Convergence
\end{keywords}

\begin{AMS}
{49J52 $\cdot$ 65K05 $\cdot$ 65K10 $\cdot$ 90C25}
\end{AMS}

\section{Introduction}
%
%


Given two non-empty sets $C$ and $S$ whose intersection is also non-empty, the feasibility problem aims to find a common point in the intersection $C\cap S$. 
In the literature, popular numerical schemes for solving feasibility problems are developed based on projection, among them alternating projection is the fundamental one. 
The method of alternating projection was first introduced by von Neumann for the case of two linear subspaces \cite{von1933functional}, then was extended to closed convex sets by Bregman \cite{bregman1965method}. 
Relaxation is a standard approach to speed up alternating projection and related work can be found in \cite{luke2004relaxed,cegielski2008relaxed}.

Proximal splitting methods, such as Forward--Backward \cite{lions1979splitting} splitting and Backward--Backward splitting \cite{combettes2011proximal}, and Peaceman--Rachford/Douglas--Rachford splitting \cite{peaceman1955numerical,douglas1956numerical}, can also be applied to solve feasibility problem either directly or up to reformulation. Moreover, equivalence between projection based methods and proximal splitting methods can be established, such as alternating projection is equivalent to Backward--Backward splitting while relaxed alternating relaxed projection covers Peaceman--Rachford/Douglas--Rachford splitting as special cases \cite{combettes2011proximal}.

Our focus in this paper is Douglas-Rachford splitting method, which has shown to be effective for solving feasibility problem, particularly in the non-convex setting \cite{bauschke2017finite}.
However, the convergence property is rather less understood than its convex counter part. One reason for this is that Douglas--Rachford splitting method is not symmetric and non-descent, when compared to (proximal) gradient descent whose non-convex case is much better studied \cite{attouch2010proximal}.
Research on non-convex Douglas--Rachford either focuses on specific cases or imposing stronger assumptions (\eg smoothness) and proposes modifications to the original iteration. For instance \cite{artacho2013global} considers Douglas--Rachford splitting for solving feasibility problem of a line intersecting with a circle, and conditions for convergence are provided. In \cite{li2016douglas}, the authors proposed a damped Douglas--Rachford splitting method for general non-convex optimization problem under the condition that one function has a Lipschitz continuous gradient.

The study of this paper is motivated by applying Douglas--Rachford to solve Sudoku puzzle\footnote{\url{https://en.wikipedia.org/wiki/Sudoku}}, for which three different convergence behaviors are observed
\begin{itemize}
\item Globally, the method converges {sub-linearly}. 
\item Locally, two regimes occur: finite termination and linear convergence. 
\end{itemize}
Finite termination and local linear convergence are reported in the literature \cite{BauschkeFiniteDR15,bauschke2017finite}, however, conditions in respective work either are designed for convex setting or cannot be satisfied by Sudoku puzzle. 
Therefore, a new analysis is needed for Douglas--Rachford splitting which is the aim of this paper: 
\begin{enumerate}
\item {\bf Finite termination} 
Under a non-degeneracy condition, see \eqref{eq:ndc}, 
we show in Section \ref{sec:local_rate} that one sequence generated by Douglas--Rachford splitting has the finite termination property. 
All sequences terminate in a finite number of iterations if the problem satisfies certain assumptions (\eg polyhedrality, see Assumptions \iref{A:C}-\iref{A:intersection}).

\item {\bf Local linear convergence} 
We also provide a precise characterization for the local linear convergence of Douglas--Rachford splitting method. Particularly, for Sudoku puzzle, we prove that locally the linear rate of convergence of Douglas--Rachford splitting method is precisely $\frac{\sqrt{5}}{5}$. Moreover, such a rate is independent of puzzle size.  
For the damped Douglas--Rachford splitting method, we also provide an exact estimation of the local linear rate which depends on the damping coefficient. 

\end{enumerate}


\paragraph{Relation to Prior Work} \label{subsec:rel}
There are several existing work studying the finite termination property of the standard Douglas--Rachford splitting method. In \cite{BauschkeFiniteDR15}, the authors established finite convergence of Douglas--Rachford in the presence of Slater's condition, for solving convex feasibility problems where one set is an affine subspace and the other is a polyhedron, or one set is an epigraph and the other one is a hyperplane. 
The result was extended to general convex optimization problems in \cite{liang2017local} under the notion of partial smoothness \cite{Lewis-PartlySmooth}. 
In \cite{matsushita2016finite}, finite termination is proved for finding a point which is guaranteed to be in the interior of one set whose interior is assumed to be non-empty. The result of \cite{BauschkeFiniteDR15} was later extended to the non-convex case in \cite{bauschke2017finite}, where one of the two sets can be finite. 


For local linear convergence, results can be found in for instance \cite{phan2016linear} where linear convergence of Douglas--Rachford splitting method is established under a regularity condition. Similar results can be found in \cite{HesseLuke13,hesse2013nonconvex}. 
Under a constraint qualification condition, \cite{li2016douglas} also discussed the local linear convergence property of the damped Douglas--Rachford splitting method.


\paragraph{Paper Organization}
The rest of the paper is organized as follows. 
Some preliminaries are collected in Section~\ref{sec:preliminary}. Section~\ref{sec:drs} states our main assumptions on problem~\eqref{eq:feasibility} and introduces the standard and damped Douglas--Rachford algorithms, global convergence is also discussed. Our main result on local convergence of Douglas--Rachford is presented in Section~\ref{sec:local_rate}. In Section~\ref{sec:experiment}, we report numerical experiments on Sudoku puzzle and $s$-queens puzzle to support our theoretical findings.

\section{Preliminaries}\label{sec:preliminary}

Throughout the paper, $\bbN$ is the set of nonnegative integers, $\bbR^{n}$ is a finite $n$-dimensional real Euclidean space equipped with scalar product $\iprod{\cdot}{\cdot}$ and norm $\norm{\cdot}$. $\Id$ denotes the identity operator on $\bbR^{n}$. 
For a matrix $M \in \bbR^{n \times n}$, we denote $\rho(M)$ its spectral radius.

\paragraph{Projection and reflection}
Below we collect necessary concepts related to sets. 

\begin{definition}[Distance and indicator function]
Let $C \subset \bbR^n$ be non-empty and $x \in \bbR^n$. The {\it distance function} of $x$ to $C$ is defined by
\[
\dist(x, C) \eqdef \inf_{y\in C} \norm{x - y}.
\]
The {\it indicator function} of $C$ is defined by $\iota_{C}(x)
=
\left\{
\begin{aligned}
0 &: x \in C , \\
\pinf &: x \notin C .
\end{aligned}
\right.
$
\end{definition}

\begin{definition}[Projection \& reflection]
Let $C \subset \bbR^n$ be non-empty and $x\in\bbR^n$. The projection of $x$ onto $C$, denoted by $\proj_{C}(x)$, is a set defined by 
\[
\proj_{C}(x) \eqdef \Ba{ y \in C : \norm{x-y} = \dist(x, C) } .
\]
The mapping $\proj_{C} \colon \bbR^n \setvalued C$ is called the {\it projection operator}. The {\it relaxed projection} $\proj^{\lambda}_{C}$ is defined via
\[
\proj^{\lambda}_{C}(x) = \lambda \proj_{C}(x) + (1-\lambda) x ,
\]
where $\lambda \in ]0, 2]$ is the relaxation parameter. 
When $\lambda = 2$, the corresponding mapping is called {\it reflection} and denoted by $\rproj_{C}(x) = 2 \proj_{C}(x) - x $. 
\end{definition}

\begin{definition}[Prox-regularity]
A non-empty closed set $C \subset \bbR^n$ is {\it prox-regular} at $x \in C$ for $v$ if $x = \proj_{C}(x+v)$.  If $\proj_{C}$ is single-valued in an open neighborhood of $x \in C$, $C$ is called {\it prox-regular} at $x$. 
\end{definition}

\begin{definition}[Normal vector]
Given $C \subset \bbR^n$ and $x \in C$, the {\it proximal normal cone} $\pnormal{C}(x)$ of $C$ at $x$ is defined by
\[
\pnormal{C}(x) = \mathrm{cone}\Pa{ \proj_{C}^{-1}(x) - x } .
\]
The {\it limiting normal cone} $\normal{C}(x)$ is defined as any vector that can be written as the limit of proximal normals: $v \in \normal{C}(x)$ if and only if there exists sequences $\seq{\xk} \in C$ and $\seq{\vk}$ in $\normal{C}(\xk)$ such that $\xk\to x $ and $\vk \to v$. 
\end{definition}

Let $C_1, C_2 \subset \bbR^n$ be two sets with non-empty intersection. The feasibility problem of $C_1, C_2$ is to find a common point in the intersection, \ie
\[
\find x \in \bbR^n \qstq x \in C_1 \mcap C_2 .
\]
A fundamental algorithm to solve the problem is the alternating projection method which, as indicated by the name, represents the procedure: from a given point $x_0$, apply projection onto each set alternatively
\beq\label{eq:ap}
\xkp = \proj_{C_2} \proj_{C_1} (\xk) .
\eeq
One can also consider relaxation for each projection operator and the whole iteration, which results in the following iteration
\[
\xkp = \xk + \lambda \Pa{ \proj^{\lambda_{2}}_{C_2} \proj^{\lambda_{1}}_{C_1} (\xk) - \xk }  ,
\]
where $\lambda, {\lambda_{1}}, {\lambda_{2}}$ are relaxation parameters. 
The iteration becomes Peaceman--Rachford splitting (alternating reflection) for $(\lambda, {\lambda_{1}}, {\lambda_{2}}) = (1,2,2)$ and Douglas--Rachford splitting for $(\lambda, {\lambda_{1}}, {\lambda_{2}}) = (1/2,2,2)$. We refer to \cite{bauschke1996projection} for a survey on the alternating projection method. 



%
%
%

\paragraph{Convergent Matrices} 
To discuss the local linear convergence, we need the following preliminary results on convergent matrices which are taken from \cite{meyer2000matrix}.

\begin{definition}[Convergent matrices]\label{def:convmat}
A matrix $M \in \bbR^{n \times n}$ is {\it convergent} to $\Minf \in \bbR^{n \times n}$ if, and only if, $\lim_{k\to\pinf}\norm{M^k - \Minf} = 0 $.
$M$ is said to be {\it linearly convergent} if there exists $\eta \in [0, 1[$ and $K \in \bbN$ such that for all $ k \geq K$, there holds $\norm{M^{k}-\Minf} = O(\eta^{k}) $. 
If $M$ does not converge at any rate $\eta'\in[0,\eta[$ then $\eta$ is called the {\it optimum convergence rate}.
\end{definition}

\begin{definition}[Semi-simple eigenvalue]\label{def:semi_simple}
For $M \in\bbR^{n\times n}$, an eigenvalue $\eta$ is called {\it semi-simple} if and only if $\rank(M-\eta\Id) = \rank((M-\eta\Id)^2)$. 
\end{definition}
%

%

\begin{theorem}[Limits of powers]\label{thm:limits-of-power}
For $M \in \bbR^{n\times n}$, the power of $M$ converges to $\Minf$ if and only if  $\rho(M) < 1$ or $\rho(M) = 1$ with $1$ being the only eigenvalue on the complex unit circle and semi-simple. 
%
\end{theorem}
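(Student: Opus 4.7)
The natural route is through the Jordan canonical form, which reduces everything to the behaviour of powers of individual Jordan blocks. The plan is as follows.

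Write $M = P J P^{-1}$ with $J$ a Jordan matrix whose diagonal blocks $J_{\eta,m}$ are of size $m\times m$ with eigenvalue $\eta$ on the diagonal and ones on the super-diagonal. Since $M^k = P J^k P^{-1}$ and $P,P^{-1}$ are fixed invertible matrices, convergence of $M^k$ is equivalent to convergence of $J^k$, and in turn to convergence of each block power $J_{\eta,m}^k$ individually. This reduction gives the ``if and only if'' structure: $M^k$ converges precisely when every Jordan block of $M$ has a convergent power.

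Next I would compute $J_{\eta,m}^k$ explicitly. Writing $J_{\eta,m} = \eta\,\Id + N$ where $N$ is the nilpotent shift with $N^m = 0$, the binomial expansion gives
\[
J_{\eta,m}^k = \sum_{j=0}^{m-1} \binom{k}{j} \eta^{k-j} N^j,
\]
so the entries of $J_{\eta,m}^k$ are, up to constants, of the form $\binom{k}{j}\eta^{k-j}$ for $0 \le j \le m-1$. From this formula I can read off the four cases: (i) if $|\eta|<1$, every such term tends to $0$ since polynomial growth is dominated by geometric decay; (ii) if $|\eta|>1$, the term with $j=0$ already diverges; (iii) if $|\eta|=1$ with $\eta\neq 1$, even the $j=0$ term $\eta^k$ fails to converge because $\{\eta^k\}$ has no limit on the unit circle; (iv) if $\eta=1$, the $j=0$ term equals $1$ but for $j\ge 1$ the factor $\binom{k}{j}$ grows polynomially, so convergence requires $m=1$.

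Assembling these cases, $J^k$ (hence $M^k$) converges if and only if every eigenvalue $\eta$ of $M$ satisfies either $|\eta|<1$, or $\eta = 1$ and the corresponding Jordan blocks are all $1\times 1$. This last condition is exactly semi-simplicity of the eigenvalue $1$: the geometric and algebraic multiplicities agree, which by standard linear algebra is equivalent to $\rank(M-\Id) = \rank((M-\Id)^2)$ as stated in Definition \ref{def:semi_simple}. Rephrasing in terms of the spectral radius gives precisely the two alternatives in the theorem: either $\rho(M)<1$, or $\rho(M)=1$ with $1$ the unique eigenvalue on the unit circle and semi-simple.

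The main subtlety, and where a careful argument is needed, is the direction showing that $|\eta|=1$, $\eta\neq 1$ prevents convergence: the sequence $\eta^k$ is bounded, so a naive norm argument is insufficient, and one must observe that $\eta^k$ has at least two distinct limit points on the unit circle (for instance via the equidistribution / pigeonhole argument for $\eta = e^{i\theta}$). Everything else is a direct computation from the Jordan block formula.
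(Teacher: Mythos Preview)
Your Jordan-form argument is correct and is the standard proof of this classical fact. Note, however, that the paper does not supply its own proof of Theorem~\ref{thm:limits-of-power}: it is stated as a preliminary result ``taken from \cite{meyer2000matrix}'', so there is nothing in the paper to compare against beyond the citation. Your route via the explicit block powers $J_{\eta,m}^k = \sum_{j=0}^{m-1}\binom{k}{j}\eta^{k-j}N^j$ is exactly the argument one finds in Meyer's book, so in that sense you are aligned with the intended reference.

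One small sharpening: for the case $|\eta|=1$, $\eta\neq 1$, you do not need equidistribution or pigeonhole. If $\eta^k$ converged to some $L$, then also $\eta^{k+1}\to L$, whence $\eta L = L$ and so $L=0$ since $\eta\neq 1$; but $|\eta^k|=1$ forces $|L|=1$, a contradiction. This two-line argument replaces the ``main subtlety'' you flag and keeps the proof entirely elementary.
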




Whenever $M$ is convergent, it converges {linearly} to $\Minf$, and we have the following lemma.

\begin{lemma}[Convergence rate]\label{lem:rate-M}
Suppose $M \in \bbR^{n\times n}$ is convergent to some $\Minf \in \bbR^{n\times n}$, then
\begin{enumerate}[label={\rm (\roman{*})}]
\item
for any $\kinN$, 
\[
M^{k} - \Minf = (M-\Minf)^{k} \qandq
\norm{M^{k} - \Minf} \leq \norm{M-\Minf}^{k} .
\]
The equality holds only when $M$ is {normal}.
\item
We have $\rho(M-\Minf) < 1$, and $M$ is linearly convergent for any $\eta \in ]\rho(M-\Minf), 1[$. 
%
\item
$\rho(M-\Minf)$ is the optimal convergence rate if one of the following holds
	\begin{enumerate}[label={\rm (\alph{*})}]
	\item
	$M$ is normal.
	\item
	All the eigenvalues $\eta \in \Theta_{M}$ such that $\abs{\eta} = \rho(M-\Minf)$ are semi-simple.
	\end{enumerate}
\end{enumerate}
\end{lemma}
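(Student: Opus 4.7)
My plan rests on first identifying the limit matrix $\Minf$ as a spectral projector. Taking $k \to \pinf$ in $M^{k+1} = M M^{k} = M^{k} M$ and in $M^{2k} = M^{k} M^{k}$ gives $M\Minf = \Minf M = \Minf$ and $(\Minf)^{2} = \Minf$, so $\Minf$ is an idempotent that commutes with $M$. Concretely, $\Minf$ is the spectral projector of $M$ onto $\ker(M-\Id)$ along the invariant subspace carrying all the other eigenvalues of $M$, and I would set this identification up front since the same decomposition powers all three parts.

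For~(i), commutativity lets me expand $(M-\Minf)^{k} = \sum_{j=0}^{k}\binom{k}{j} M^{j}(-\Minf)^{k-j}$. The identity $M^{j}\Minf = \Minf$ for $j \geq 1$ collapses every mixed term ($j < k$) to $\binom{k}{j}(-1)^{k-j}\Minf$, and $\sum_{j=0}^{k-1}\binom{k}{j}(-1)^{k-j} = (1-1)^{k} - 1 = -1$ then produces $(M-\Minf)^{k} = M^{k} - \Minf$. The norm inequality follows from submultiplicativity. For equality when $M$ is normal, I would argue that $\Minf$ is an orthogonal spectral projector of $M$, hence $M$ and $\Minf$ are simultaneously orthogonally diagonalisable, $M-\Minf$ is normal, and for a normal matrix $A$ one has $\norm{A^{k}} = \norm{A}^{k} = \rho(A)^{k}$.

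For~(ii), I would apply Theorem~\ref{thm:limits-of-power} to $A = M - \Minf$: by~(i), $A^{k} = M^{k} - \Minf \to 0$, and if $\rho(A)$ were $1$ the theorem would force $A^{k}$ to converge to a non-zero spectral projector, so in fact $\rho(M-\Minf) < 1$. Linear convergence at any $\eta \in ]\rho(A),1[$ is then immediate from Gelfand's formula $\rho(A) = \lim_{k}\norm{A^{k}}^{1/k}$. For~(iii), the lower bound $\norm{A^{k}} \geq \rho(A^{k}) = \rho(A)^{k}$ already shows that no rate strictly below $\rho(M-\Minf)$ can be attained, so only sharpness remains. In case~(a), normality of $A$ gives $\norm{A^{k}} = \rho(A)^{k}$, so the rate is achieved. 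In case~(b), subtracting $\Minf$ removes the eigenvalue $1$ of $M$ while leaving the other Jordan blocks intact; the semi-simplicity hypothesis then says that every Jordan block of $A$ with eigenvalue on the circle $|z|=\rho(A)$ is $1\times 1$, so the standard per-block bound $\norm{J_{d}(\lambda)^{k}} = O(k^{d-1}|\lambda|^{k})$ gives $\norm{A^{k}} = O(\rho(A)^{k})$ with no polynomial factor in $k$.

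The main delicate point will be making rigorous the assertion that normality, or the Jordan structure of $M$ away from the eigenvalue $1$, passes cleanly to $A = M-\Minf$. A uniform way to handle this is the contour representation $\Minf = \frac{1}{2\pi i}\oint_{\gamma}(z\Id - M)^{-1}\,dz$ around the eigenvalue $1$, which automatically yields an $M$-commuting idempotent whose complementary range is exactly the invariant subspace carrying the non-unit spectral data of $M$. Once this decomposition is in hand, (i)--(iii) reduce to book-keeping on the resulting direct sum.
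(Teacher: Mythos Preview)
Your argument is essentially correct and self-contained, whereas the paper does not prove this lemma at all: its entire proof is the sentence ``See Theorems~2.12, 2.13, 2.15 and 2.16 of \cite{bauschke2016optimal}.'' So the two approaches differ in the most literal way possible---you supply the mathematics, the paper outsources it. What your route buys is independence from an external source and a clean structural picture (the limit $\Minf$ as the spectral projector onto $\ker(M-\Id)$, commuting with $M$), from which all three items follow by direct computation; what the paper's citation buys is brevity and the authority of a reference where these facts are already carefully stated. A couple of small points on your write-up: in~(ii) your appeal to Theorem~\ref{thm:limits-of-power} is fine, but the theorem as stated does not identify the limit, so phrase the contradiction as ``if $\rho(A)=1$ then $1$ is an eigenvalue of $A$, hence $A^{k}v=v\neq 0$ for some eigenvector $v$, contradicting $A^{k}\to 0$''; and in~(i) the lemma's phrase ``equality holds only when $M$ is normal'' literally asserts necessity rather than the sufficiency you prove---this is almost certainly an imprecision in the statement rather than a gap in your argument, but be aware of it.
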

\begin{proof}
See Theorems 2.12, 2.13, 2.15 and 2.16 of \cite{bauschke2016optimal}.
\end{proof}


\paragraph{Angles between Subspaces}

To precisely characterize the local linear convergence rate, we need the following concepts regarding the angles between subspaces. Let $T_1$ and $T_2$ be two linear subspaces with dimension $p\eqdef\dim(T_1)$ and $q\eqdef\dim(T_2)$, and without loss of generality, suppose that $1\leq p \leq q \leq n-1$.

\begin{definition}[Principal angles] 
	The {\it principal angles} $\theta_k \in [0,\frac{\pi}{2}]$, $k=1,\ldots,p$ between linear subspaces $T_1$ and $T_2$ are defined by, with $u_0 = v_0 \eqdef 0$ and inductively
	\begin{align*}
	\cos(\theta_k) \eqdef \dotp{u_k}{v_k} = \max \big\{\dotp{u}{v} ~~~\mathrm{s.t.}~~~	
	& u \in T_1, v \in T_2, \norm{u}=1, \norm{v}=1, \\
	&\dotp{u}{u_i}=\dotp{v}{v_i}=0, ~ i=0,\dotsm,k-1\big\} .
	\end{align*}
	The principal angles $\theta_k$ are unique with $0 \leq \theta_1 \leq \theta_2 \leq \dotsm \leq \theta_p \leq \pi/2$.
\end{definition}

\begin{definition}[Friedrichs angle] 
	The {\it Friedrichs angle} $\theta_{F} \in [0,\frac{\pi}{2}]$ between $T_1$ and $T_2$ is
	\[
	\begin{aligned}
	\cos\Pa{ \theta_F(T_1,T_2) } \eqdef \max \dotp{u}{v} ~~~\mathrm{s.t.}~~~
	&u \in T_1 \cap (T_1 \cap T_2)^\perp, \norm{u}=1 , \\
	&v \in T_2 \cap (T_1 \cap T_2)^\perp, \norm{v}=1  .
	\end{aligned}
	\]
\end{definition}


The following lemma shows the relation between the Friedrichs and principal angles.

\begin{lemma}[{\cite[Proposition~3.3]{bauschke2016optimal}}]
	\label{lem:fapa}
	We have $\theta_{F}(T_1,T_2) = \theta_{d+1} > 0$ where $d \eqdef \dim(T_1 \cap T_2)$.
\end{lemma}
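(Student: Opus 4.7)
The plan is to show that the first $d$ principal angles all vanish, and then that the variational problem defining $\theta_{d+1}$ reduces exactly to the one defining the Friedrichs angle.

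First, I would observe the elementary fact that whenever $u \in T_1$, $v \in T_2$ are unit vectors with $\dotp{u}{v} = 1$, the identity $\norm{u-v}^2 = 2 - 2\dotp{u}{v}$ forces $u = v$, and hence $u \in T_1 \cap T_2$. Conversely, picking any orthonormal basis $e_1,\dots,e_d$ of $T_1 \cap T_2$, the choice $u_k = v_k = e_k$ is admissible at step $k \leq d$ (all vectors lie in $T_1$ and $T_2$, are unit, and satisfy the required orthogonality to previously chosen vectors), and attains $\dotp{u_k}{v_k} = 1$, which is the maximum of $\dotp{u}{v}$ over unit vectors. By uniqueness of the principal angles this gives $\theta_1 = \dots = \theta_d = 0$, and moreover any admissible sequence of first $d$ principal vectors must have $u_k = v_k \in T_1 \cap T_2$ and therefore forms an orthonormal basis of $T_1 \cap T_2$.

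Next, I would analyze step $k = d+1$. Since $\{u_1,\dots,u_d\}$ is an orthonormal basis of $T_1 \cap T_2$, the constraints ``$u \in T_1$ and $\dotp{u}{u_i} = 0$ for $i=1,\dots,d$'' are equivalent to $u \in T_1 \cap (T_1 \cap T_2)^\perp$, and symmetrically $v \in T_2 \cap (T_1 \cap T_2)^\perp$. The optimization problem defining $\cos(\theta_{d+1})$ thus becomes identical to the one defining $\cos(\theta_F(T_1,T_2))$, yielding $\theta_F(T_1,T_2) = \theta_{d+1}$.

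Finally, to show the strict inequality $\theta_{d+1} > 0$, I would argue by contradiction: if $\cos(\theta_{d+1}) = 1$ then by the parallelogram identity the optimal $u$ and $v$ coincide, so $u \in T_1 \cap T_2$; but $u$ is also constrained to lie in $(T_1 \cap T_2)^\perp$, forcing $u = 0$ and contradicting $\norm{u} = 1$. There is no real obstacle in this proof; the only mild subtlety is ensuring that the identification of constraints at step $d+1$ is independent of the (non-unique) choice of the first $d$ principal vectors, which the parallelogram argument handles cleanly.
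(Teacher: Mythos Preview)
The paper does not give its own proof of this lemma; it simply cites Proposition~3.3 of \cite{bauschke2016optimal} and moves on. Your argument is correct and self-contained, and it is essentially the standard route: the first $d$ principal-vector pairs must coincide (by the expansion $\norm{u-v}^2 = 2 - 2\dotp{u}{v}$) and hence form an orthonormal basis of $T_1\cap T_2$, after which the feasibility constraints at step $d+1$ collapse to those defining the Friedrichs angle. Two minor remarks: (i) the statement tacitly assumes $d < p$, since otherwise $\theta_{d+1}$ is undefined and the Friedrichs feasible set is empty --- your attainment step in the final contradiction uses this silently but legitimately; (ii) the identity you invoke is the expansion of $\norm{u-v}^2$, not the parallelogram law, though this is only a naming quibble.
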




\section{Problem and algorithm}
\label{sec:drs}

The formal statement of the feasibility problem is written below
\begin{equation}\label{eq:feasibility}
\find~~{x \in \RR^n} \qstq x \in C \mcap S  ,
\end{equation}
where the following assumptions are imposed
\begin{enumerate}[leftmargin=4em, label= ({\textbf{A.\arabic{*}})}, ref= \textbf{A.\arabic{*}}]
	\item \label{A:C} 
	$C \subset \bbR^n$ is a closed set; 
	\item \label{A:S} 
	$S \subset \bbR^n$ is an affine subspace;
	\item \label{A:intersection} 
	$C \mcap S \neq \emptyset$, \ie the intersection is non-empty.
\end{enumerate}
Note that the problem \eqref{eq:feasibility} is not necessarily convex as we suppose $C$ is only non-empty and closed. Examples of \eqref{eq:feasibility} are provided in Section \ref{sec:local_rate}, including the Sudoku puzzle and $s$-queens puzzle. 


\subsection{Douglas--Rachford splitting method}


The development of Douglas--Rachford (DR) splitting method \cite{douglas1956numerical} dates back to 1950s for solving numerical PDEs. In recently years, the method has also been shown to be effective for non-convex feasibility problem \cite{hesse2014alternating,artacho2014recent}. 
Details of the method for solving \eqref{eq:feasibility} is described in Algorithm~\ref{alg:dr}. 

\begin{center}
	\begin{minipage}{0.95\linewidth}
		\begin{algorithm}[H]
			\caption{Standard Douglas--Rachford splitting (DR)} \label{alg:dr}
			{\noindent{\bf{Initial}}}: $z_0 \in \bbR^n$\;
			\Repeat{convergence}{
				\beq\label{eq:dr}
                \begin{aligned}
                	\xkp &= \proj_{S} (\zk)  , \\
                	\ukp &\in \proj_{C}\pa{2\xkp - \zk} , \\
                	\zkp &= \zk + \ukp - \xkp  , 
                \end{aligned}
                \eeq
			}
		\end{algorithm}
	\end{minipage}
\end{center}

The above iteration can be written as the fixed-point iteration of variable $\zk$. Denote the fixed-point operator
\beq\label{eq:fpo-dr}
	\fDR \eqdef \sfrac{1}{2} \Pa{ \pa{2\proj_{C}-\Id} \pa{2\proj_{S} - \Id } + \Id } ,
\eeq
then we have $\zkp = \fDR(\zk)$. 
The other two variables $\uk, \xk$ are called the {\it shadow sequences} \cite{bauschke2014local}. 

Determining the convergence properties of Douglas--Rachford splitting for the non-convex setting is a challenging problem, the non-descent property of the method makes it much harder to obtain convergence result than the descent-type methods which includes (proximal) gradient descent \cite{attouch2010proximal}. 

Moreover, since the method has three different sequences $\uk,\xk$ and $\zk$, various different convergence behaviors may occur. We refer to \cite{bauschke2017finite} for more detailed discussions. 
In Example~\ref{exp:circle_line}, we demonstrate a case of a circle intersecting with a line where:
\begin{itemize}
	\item The shadow sequences $\seq{\uk}$, $\seq{\xk}$ converge to $\usol$ and $\xsol$, respectively. But $\usol\neq\xsol$. 
	\item The fixed-point sequence $\seq{\zk}$ diverges.
\end{itemize}
As our main interest in this paper is to study the local behavior, for the rest of the paper, we suppose that the standard DR is globally convergent: 
\begin{enumerate}[leftmargin=4em, label= ({\textbf{A.\arabic{*}})}, ref= \textbf{A.\arabic{*}}]
\setcounter{enumi}{3}
	\item \label{A:D} 
	The standard Douglas--Rachford splitting method for solving \eqref{eq:feasibility} is globally convergent. 
\end{enumerate}
Consequently, one has
\[
\zk \to \zsol \in \fix(\fDR) \eqdef \Ba{ z \in \bbR^n : z = \fDR(z) } 
\qandq
\uk, \xk \to \xsol \in \proj_{S}(\zsol) .
\]
To avoid assumption \eqref{A:D}, people either turn to specific cases \cite{artacho2016global} or imposing stronger assumptions such as smoothness \cite{themelis2020douglas}. 
Modifications to the original Douglas--Rachford splitting method are also considered in the literature.
Below we describe a damped version of Douglas--Rachford proposed in \cite{li2016douglas}.


Solving the feasibility problem \eqref{eq:feasibility} is equivalent to the following constrained smooth optimization. 
\beq\label{eq:dist_C}
\min_{x\in\bbR^n}~ \sfrac{1}{2} \dist^2(x, S) ~\qstq x \in C .
\eeq
In \eqref{eq:dr}, the update of $\xkp$ is equivalent to solving 
$\min_{x\in\bbR^n}\, \iota_{S}(x) + \frac{1}{2\gamma}\norm{x - \zk}^2$.
Replacing the indicator function with the distance function, 
\[
\min_{x\in\bbR^n}\, \sfrac{1}{2} \dist^2(x, S) + \sfrac{1}{2\gamma}\norm{x - \zk}^2 ,
\]
we then get
\[
\xkp 
= \sfrac{1}{1+\gamma} \Pa{{\zk} + \gamma \proj_{S} (\zk) }
= \zk + \sfrac{\gamma}{1+\gamma} \Pa{ \proj_{S} (\zk) - \zk }
= \proj_{S}^{\frac{\gamma}{1+\gamma}}(\zk)  .
\]
As a result, we obtain the algorithm proposed in \cite{li2016douglas}.

\begin{center}
	\begin{minipage}{0.95\linewidth}
		\begin{algorithm}[H]
			\caption{A damped Douglas--Rachford splitting (dDR)} \label{alg:ddr}
			{\noindent{\bf{Initial}}}: $\gamma > 0$, $z_0 \in \bbR^n$\; 
			\Repeat{convergence}{
				\beq\label{eq:ddr}
				\begin{aligned}
					\xkp &= \proj_{S}^{\frac{\gamma}{1+\gamma}}(\zk) , \\ 
					\ukp &\in \proj_{C}\pa{2\xkp - \zk} , \\
					\zkp &= \zk + \ukp - \xkp  , 
				\end{aligned}
				\eeq
			}
		\end{algorithm}
	\end{minipage}
\end{center}

We refer to the original work \cite{li2016douglas} for a more detailed discussion of Algorithm~\ref{alg:ddr}. 
%
When $\gamma = +\infty$, Algorithm~\ref{alg:ddr} recovers the standard Douglas--Rachford splitting method \eqref{eq:dr}. 
The fixed-point operator of dDR reads
\beq\label{eq:fpo-dr}
	\fdDR \eqdef \sfrac{1}{2} \Pa{ \pa{2\proj_{C}-\Id} \pa{2\proj_{S}^{\frac{\gamma}{1+\gamma}} - \Id } + \Id } .
\eeq
We have the following convergence result of dDR from \cite{li2016douglas}.

\begin{lemma}[{Global convergence of dDR \cite[Theorem 5]{li2016douglas}}]\label{thm:global_convergence}
For the non-convex feasibility problem \eqref{eq:feasibility}, suppose Assumptions \iref{A:C}-\iref{A:intersection} hold and moreover $C$ is compact. Choose $\gamma \in ]0, \sqrt{3/2}-1[$ for the Douglas--Rachford splitting method \eqref{eq:ddr}, then the sequence $\seq{\uk,\xk, \zk}$ is bounded, and given any cluster point $(\usol, \xsol, \zsol)$ of the sequence, there holds $\norm{\zk-\zkm}\to0$, $\usol = \xsol$ and $\xsol$ is a stationary point of the problem \eqref{eq:dist_C}. 
\end{lemma}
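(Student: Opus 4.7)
The plan is to mimic the Lyapunov-function argument of \cite{li2016douglas}, adapted to the splitting $f + \iota_{C}$ with $f \eqdef \tfrac{1}{2}\dist^{2}(\cdot, S)$. Because $S$ is an affine subspace (Assumption~\iref{A:S}), $\nabla f = \Id - \proj_{S}$ is $1$-Lipschitz, and the $x$-update in Algorithm~\ref{alg:ddr} is precisely the proximal step $\xkp = \operatorname{prox}_{\gamma f}(\zk)$. Compactness of $C$ will later provide the boundedness needed to extract cluster points.

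First I would introduce a merit function of the form
\[
\Phi_{\gamma}(u, x, z) \eqdef f(x) + \iota_{C}(u) + \tfrac{1}{\gamma}\iprod{z - x}{u - x} + \tfrac{1}{2\gamma}\norm{u - x}^{2},
\]
and establish a sufficient-decrease inequality
\[
\Phi_{\gamma}(\ukp, \xkp, \zkp) \leq \Phi_{\gamma}(\uk, \xk, \zk) - c(\gamma)\norm{\zkp - \zk}^{2},
\]
with $c(\gamma) > 0$ precisely when $\gamma \in \,]0, \sqrt{3/2}-1[$. This is built from two ingredients. The proximal identity $\xkp = \operatorname{prox}_{\gamma f}(\zk)$ combined with the $1$-Lipschitz continuity of $\nabla f$ gives a standard descent in the $f$-part up to an $O(\norm{\xkp - \xk}^{2})$ remainder. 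The inclusion $\ukp \in \proj_{C}(2\xkp - \zk)$ yields the proximal-normal-cone condition $\zk - \xkp + (\xkp - \ukp) \in \pnormal{C}(\ukp)$, and, since $\iota_{C}(\uk)$ and $\iota_{C}(\ukp)$ both vanish, this translates into a cross-term estimate which, combined with the update rule $\zkp - \zk = \ukp - \xkp$, closes the descent. The threshold $\sqrt{3/2}-1$ then appears as the positivity range of the resulting quadratic form in $\gamma$.

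With the descent in hand the rest is routine. Compactness of $C$ bounds $\seq{\uk}$; from $\xkp = \operatorname{prox}_{\gamma f}(\zk)$ and $\zkp - \zk = \ukp - \xkp$ the sequences $\seq{\xk}$ and $\seq{\zk}$ are also bounded, and $\Phi_{\gamma}$ is bounded from below on bounded sets. Telescoping yields $\sum_{k} \norm{\zkp - \zk}^{2} < +\infty$, so $\norm{\zk - \zkm} \to 0$, and in particular $\norm{\ukp - \xkp} \to 0$, forcing $\usol = \xsol$ at every cluster point. Finally, along a convergent subsequence I would pass to the limit in the optimality conditions: combining $\zk - \xkp = \gamma \nabla f(\xkp)$ with the proximal-normal-cone inclusion above and invoking outer semicontinuity of $\normal{C}(\cdot)$ delivers $-\nabla f(\xsol) \in \normal{C}(\xsol)$, which is precisely the stationarity condition for problem~\eqref{eq:dist_C}.

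The main obstacle is the quantitative bookkeeping that pins down $c(\gamma)$ and the admissible range $]0, \sqrt{3/2}-1[$: the balance between the $f$-descent (whose error term scales like $1/\gamma$) and the cross-term from the non-convex $C$-projection is delicate, and only strict positivity of the resulting quadratic in $\gamma$ produces sufficient decrease. A secondary subtlety is that the projection inequality for $\ukp$ must rely only on the proximal-normal-cone characterization of $\proj_{C}$, since Assumption~\iref{A:C} does not grant convexity of $C$.
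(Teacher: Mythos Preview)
The paper does not prove this lemma at all: it is quoted verbatim as \cite[Theorem~5]{li2016douglas} and used as a black box. Your proposal therefore goes beyond what the paper does, and it correctly reconstructs the Lyapunov argument of the cited reference: the merit function $\Phi_\gamma$, the sufficient-decrease inequality whose positivity constant $c(\gamma)$ singles out the interval $]0,\sqrt{3/2}-1[$, the telescoping to obtain $\norm{\zkp-\zk}\to 0$, and the passage to the limit through the proximal-normal-cone inclusion are exactly the ingredients of the Li--Pong proof.

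One step deserves a slightly sharper justification than you give. The sentence ``from $\xkp=\operatorname{prox}_{\gamma f}(\zk)$ and $\zkp-\zk=\ukp-\xkp$ the sequences $\seq{\xk}$ and $\seq{\zk}$ are also bounded'' is not circular, but it is not immediate either. A clean way to close it is to eliminate $\xkp$ from the $z$-update to obtain
\[
\zkp - \ukp \;=\; \tfrac{\gamma}{1+\gamma}\bigl(\zk - \proj_{S}(\zk)\bigr),
\]
and then bound $\norm{\zk-\proj_S(\zk)}=\dist(\zk,S)\le\norm{\zk}+\norm{s}$ for any fixed $s\in C\cap S$. Since $\ukp\in C$ is bounded and $\tfrac{\gamma}{1+\gamma}<1$, this yields a contractive recursion $\norm{\zkp}\le \tfrac{\gamma}{1+\gamma}\norm{\zk}+M$, hence boundedness of $\seq{\zk}$ and, via the prox formula and non-expansiveness of $\proj_S$, of $\seq{\xk}$. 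With this in place your lower bound on $\Phi_\gamma$ and the telescoping go through as stated.
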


In the example below, we demonstrate a case where DR fails to solve the problem while dDR succeeds.

\begin{example}[A circle intersects with a line]\label{exp:circle_line}
Let $C = \ba{x \in \bbR^2 : \norm{x}=1}$ be the unit circle and $S = \ba{x \in \bbR^2 : \dotp{x}{\left(\begin{smallmatrix}1\\2\end{smallmatrix}\right)} = \sqrt{2}}$ be a line that intersects with $C$ at two different points. 
For both methods, same initial point $z_0 = (-10,-8)$ is chosen. 
%
For damped DR, we set $\gamma = \frac{1}{5}$. In Figure~\ref{fig:circle_line} we observe: 
\begin{itemize}
\item For the standard DR (left): $\zk$ is not convergent, $\uk$ and $\xk$ converge to two different points and the method fails to find a feasible point. 
\item For the damped DR (right): all three sequences converge to the same feasible point. 
\end{itemize}
We refer to \cite{artacho2013global} for a detailed discussion of the convergence properties of the standard DR for solving this feasibility problem.
\begin{figure}[!ht]
	\centering
	\subfloat[Standard Douglas--Rachford]{ \includegraphics[width=0.375\linewidth]{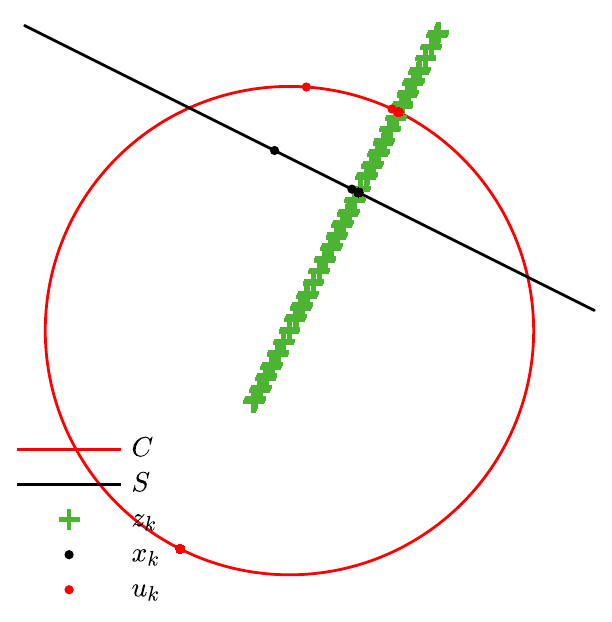} } \hspace{3ex}
	\subfloat[Damped Douglas--Rachford]{ \includegraphics[width=0.375\linewidth]{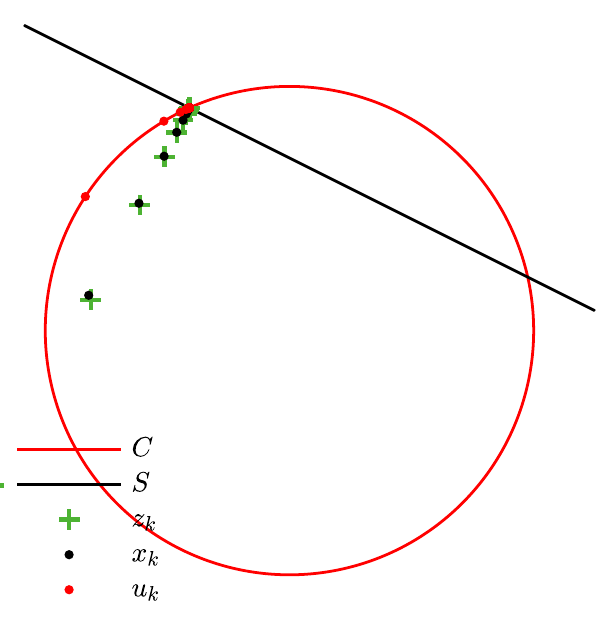} } \\
	\caption{Convergence behavior of the standard and damped Douglas--Rachford for solving the problem of a line intersecting with a circle.} 
	\label{fig:circle_line}
\end{figure}
\end{example}

Though dDR solves the problem, as remarked in the original paper \cite{li2016douglas}, it may also converges to some stationary point of \eqref{eq:dist_C} which is not a solution. 
In fact, as we shall see in the numerical experiments, for the Sudoku puzzle and $s$-queens puzzle, dDR with $\gamma = \frac{1}{5} \in ]0, \sqrt{3/2}-1[$ fails all tests while sDR achieves very good performance; see Table \ref{tb:cmp}.

\subsection{Problems with more than two sets}

Up to now, we have been dealing with feasibility problem of two sets, while in various scenarios we need to deal with the case of finding common points of more than two sets. In what follows, we briefly show that, by a {\it product space trick}, we can reformulate the problem into the form of \eqref{eq:feasibility}.

Let $m \geq 2$ be an integer, $C_i$ a non-empty closed set for each $i \in \ba{1,\ldots,m}$. Consider the following feasibility problem
\beq\label{eq:m_sets}
\find x~ \in \bbR^{n} \qstq  x \in \mcap_{i=1}^{m} C_i .
\eeq
Let $\bcH=\underset{\textrm{$m$ times}}{\underbrace{\bbR^{n} \times \cdots \times \bbR^{n}}}$ be the product space endowed with the scalar inner-product and norm $$ \forall \bmx, \bmy \in \bcH,~\bprod{\bmx}{\bmy} =\msum_{i=1}^m \iprod{x_i}{y_i},~\bnorm{\bmx} = \Pa{\ssum_{i=1}^m\norm{x_i}^2}^{1/2} .$$ 
Let $\bcC \eqdef C_{1} \times \dotsm \times C_{m}$, then $\bcC \subset \bcH$, and denote the subspace $\bcS \eqdef \ba{\bmx=(x_i)_i\in\bcH : x_1=\dotsm=x_m}$. 
The feasibility problem \eqref{eq:m_sets} can be reformulated into the following form
\beq\label{eq:msets}
\find \bmx\in\bcH \qstq \bmx \in  \bcC {\mcap} \bcS .
\eeq
The projection operator of $\bcC$ is component-wise for each set $C_i, i=1,\ldots,m$, 
\[
\proj_{\bcC}\bmx = (\proj_{C_1}x_1,\dotsm, \proj_{C_m}x_m).
\] 
Define $\bcK: \bbR^n \to \bcS,~x\mapsto(x,\dotsm,x) $, 
then we have $\proj_{\bcS}(\bmx) = \bcK \pa{\tfrac{1}{m}\sum_{i=1}^m x_i}$. 

Adapting the standard Douglas--Rachford to the case of \eqref{eq:m_sets}, we obtain 
\beq\label{eq:dr_msets}
\begin{aligned}
&x_{k+1} = \sfrac{1}{m}\msum_{i=1}^m z_{i,k}  , \\
& \textrm{For $i=1,\ldots,m$:} \\
&\left\lfloor 
\begin{aligned}
	u_{i,k+1} &\in \proj_{C_i} (2x_{k+1} - z_{i,k}) , \\
	z_{i,k+1} &= z_{i,k} + u_{i,k+1} - x_{k+1} .
\end{aligned}
\right. 
\end{aligned}
\eeq
Note that for the standard DR, there is no need to store $\bmx$ and simply $x_{k+1} = \frac{1}{m}\msum_{i=1}^m z_{i,k}$ is sufficient. 
Correspondingly, we also have the following iteration for the damped Douglas--Rachford splitting method:\beq\label{eq:ddr_msets}
\begin{aligned}
& \textrm{For $i=1,\ldots,m$:} \\
&\left\lfloor 
\begin{aligned}
	x_{i,k+1} &= \sfrac{1}{1+\gamma} \Pa{ z_{i,k} + \gamma \sfrac{1}{m}\msum_{j=1}^m z_{j,k} }, \\
    u_{i,k+1} &\in \proj_{C_i} (2x_{i,k+1} - z_{i,k}) , \\
    z_{i,k+1} &= z_{i,k} + u_{i,k+1} - x_{i,k+1} .
\end{aligned}
\right. 
\end{aligned}
\eeq

\section{Local convergence of Douglas--Rachford splitting}
\label{sec:local_rate}

In this section we present our main result, the local convergence of Douglas--Rachford splitting. 
We first present the result in a general setting and then specialize to the case of Sudoku and $s$-queens puzzles.

\paragraph{Non-degeneracy condition} To deliver the result, a non-degeneracy condition is needed for set $C$. 
Assume Assumption \iref{A:D} holds for standard DR and that dDR is ran under the condition of Lemma \ref{thm:global_convergence}, then at convergence for both methods we have $\zk\to\zsol$ and $\uk, \xk \to \xsol$. 
We assume that $C$ is prox-regular at $\xsol$ for $\xsol-\zsol$ and the following condition holds 
\beq\label{eq:ndc}
\xsol - \zsol \in \mathrm{int}\Pa{ \normal{C}\pa{\xsol} } 
\eeq
where $\mathrm{int}(\cdot)$ stands for the interior of the set. 

\begin{remark}
The non-degeneracy condition \eqref{eq:ndc} requires $\normal{C}\pa{\xsol}$ has a non-empty interior, which means that $\xsol$ is a vertex of the set $C$. 
A graphical illustration of the non-degeneracy condition \eqref{eq:ndc} is provided in Figure \ref{fig:normalcone} below. 
\end{remark}


\begin{figure}[!ht]
	\centering
	\includegraphics[width=0.45\linewidth]{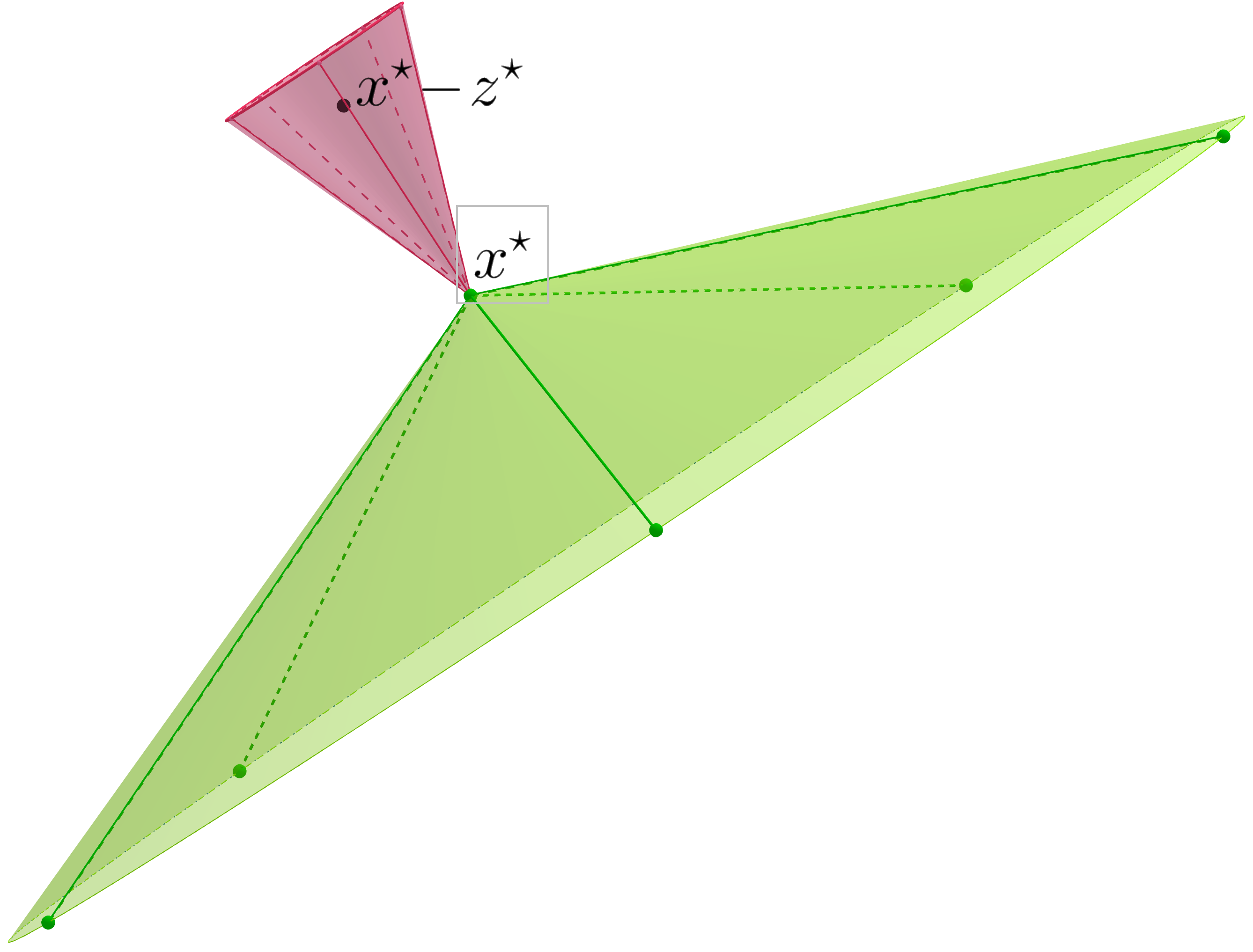}  \\
	\caption{Normal cone (red) at a point $x^*$ in the polytope $C$ (green).} 
	\label{fig:normalcone}
\end{figure}


\subsection{Local convergence of Douglas--Rachford splitting}

We start with the standard Douglas--Rachford splitting and then the damped iteration. Relation with some existing work in the literature is also discussed.

\subsubsection{The standard Douglas--Rachford splitting}

For standard Douglas--Rachford splitting method, for what follows we impose the global convergence as an assumption, \ie \iref{A:D} holds.

\begin{theorem}[Finite termination of DR]\label{thm:local_dr}
For the feasibility problem \eqref{eq:feasibility} and the Douglas--Rachford iteration \eqref{eq:dr}, suppose Assumptions \iref{A:C}-\iref{A:D} hold. 
Then $\seq{\uk,\xk,\zk}$ converges to $(\xsol,\xsol, \zsol)$ with $\zsol \in \fix(\fDR)$ being a fixed point and $\xsol = \proj_{S}(\zsol)$. 
If, moreover, the non-degeneracy condition \eqref{eq:ndc} holds, then $\seq{\uk,\xk,\zk}$ converges to $\pa{\xsol,\xsol,\zsol}$ finitely. 
\end{theorem}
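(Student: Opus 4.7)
The plan is to unpack the global convergence assumption \iref{A:D} to characterize the limit, and then use the non-degeneracy condition \eqref{eq:ndc} together with prox-regularity to force the inner projection $\proj_C$ to return $x^*$ for all large $k$, after which the $z$-iterate stabilizes in one further step.

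First, I would identify the limit. By \iref{A:D}, $z_k \to z^* \in \fix(\fDR)$. Since $\proj_S$ is affine and hence continuous, $x_{k+1} = \proj_S(z_k) \to x^* \eqdef \proj_S(z^*) \in S$, and the recursion gives $z_{k+1} - z_k = u_{k+1} - x_{k+1} \to 0$, so $u_k \to x^*$ as well. Closedness of $C$ together with closedness of the graph of $\proj_C$ then yields $x^* \in \proj_C(2x^* - z^*)$, which places $v^* \eqdef x^* - z^*$ in $\pnormal{C}(x^*)$ and certifies $x^* \in C$. This handles the first assertion of the theorem.

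Next, under the non-degeneracy condition \eqref{eq:ndc} the vector $v^*$ belongs to $\mathrm{int}\,\normal{C}(x^*)$. Combined with prox-regularity of $C$ at $x^*$ for $v^*$, this produces a radius $\rho > 0$ such that $\proj_C(w) = \{x^*\}$ for every $w$ with $\norm{w - (2x^* - z^*)} < \rho$. In the convex case this is transparent, since a convex cone is a neighborhood of each of its interior points (up to translation by $x^*$); in the prox-regular case it follows from the standard local stability of single-valued projections under small perturbations. Because $2x_{k+1} - z_k \to 2x^* - z^*$, there exists $K \in \bbN$ with $u_{k+1} = x^*$ for every $k \geq K$.

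For $k \geq K$ the update simplifies to $z_{k+1} = z_k + x^* - \proj_S(z_k)$. Writing $S = x^* + T_S$ for the parallel linear subspace and using $\proj_S(z) - x^* = P_{T_S}(z - x^*)$, we obtain
\[
z_{k+1} - x^* \;=\; (\Id - P_{T_S})(z_k - x^*) \;=\; P_{T_S^\perp}(z_k - x^*) \;\in\; T_S^\perp,
\]
and idempotence of $P_{T_S^\perp}$ immediately gives $z_{k+2} = z_{k+1}$. Hence $z_k$ is constant for $k \geq K+1$, and since $z_k \to z^*$ this constant must equal $z^*$. One more application of the update then gives $x_k = u_k = x^*$ for all $k$ large enough, proving finite termination of all three sequences.

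The main obstacle is the topological claim used in the second step: promoting the interior-normal condition plus prox-regularity at the single point $x^*$ into a genuine open neighborhood of $2x^* - z^*$ on which $\proj_C$ collapses to $\{x^*\}$. I would invoke this from the prox-regularity literature rather than reprove it from scratch. The remaining ingredients are either immediate from \iref{A:D} or follow from the short affine-projection identity above.
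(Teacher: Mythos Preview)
Your proposal is correct and follows essentially the same route as the paper: first use \iref{A:D} to identify the limits, then combine prox-regularity with \eqref{eq:ndc} to force $u_{k+1}=x^\star$ for all large $k$, and finally exploit idempotence of the projection onto $T_S^\perp$ to freeze $z_k$ after one more step. The only cosmetic difference is that the paper centers the last calculation at $z^\star$ rather than $x^\star$, but the argument is identical.
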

%
%
\begin{remark}
It is worth noting that Theorem \ref{thm:local_dr} also holds true for the convex setting. 
In \cite{BauschkeFiniteDR15} the authors study DR for solving convex affine-polyhedral feasibility problem, and impose the following condition for finite convergence
\beq\label{eq:slater}
S \mcap \mathrm{int}\pa{C} \neq \emptyset ,
\eeq
which does not hold for the non-convex case as the interior of $C$ in \eqref{eq:feasibility} can be empty; See also Section \ref{sec:puzzles} the puzzles for which \eqref{eq:slater} fails. 
In \cite{bauschke2017finite}, when the non-convex set is finite, finite termination is proved given that the other set is an affine subspace or a half-space. 
In comparison, our result here does not need the set to be finite and provides an extension to that of \cite{BauschkeFiniteDR15}, as we characterize the situation where finite convergence happens but \eqref{eq:slater} fails. 
\end{remark}
\begin{proof}
The imposed global convergence of \eqref{eq:dr} means
\beq\label{eq:conv}
\zk \to \zsol \in \fix(\fDR) 
\qandq 
\uk, \xk \to \xsol = \proj_{S}(\zsol) . 
\eeq
The prox-regularity of $C$ at $\xsol$ for $\xsol-\zsol$ and the non-degeneracy condition \eqref{eq:ndc} imply that there exists an open set $\calB$ such that 
$$ 2\xsol-\zsol\in \calB\subset \normal{C}\pa{\xsol}+\xsol \qandq \proj_{C}\pa{\calB} = \{\xsol\}.$$
By the definition of convergence, there must therefore exist $K\in\bbN$ such that $ 2\xkp-\zkp \in \calB $ for all $k\geq K$. Consequently, by the update of $\ukp$ in \eqref{eq:ddr},
\[
\ukp = \proj_{C}(2\xkp - \zk) = \xsol
\]
which is the finite convergence of $\ukp$. 

For the update of $\xk$ in \eqref{eq:dr}, this time we have directly
\[
\begin{aligned}
\xkp - \xsol 
&= \proj_{S}(\zk - \zsol). 
\end{aligned}
\]
For $\zkp$, let $K > 0$ be such that $\uk = \xsol$ for all $k \geq K$, we have
\[
\begin{aligned}
\zkp - \zsol
= (\zk-\zsol) + (\ukp-\xsol) - (\xkp - \xsol)	
&= (\zk-\zsol) - (\xkp - \xsol) \\
&= \pa{ \Id - \proj_{S} } (\zk - \zsol)  \\
&= \pa{ \Id - \proj_{S} }^{k+1-K} (z_{K} - \zsol)  . 
\end{aligned}
\]
Since $\zk \to \zsol$ and $\pa{ \Id - \proj_{S} }^{k+1-K} = \Id - \proj_{S}$, we have
\[
\begin{aligned}
0 
= \lim_{k\to\pinf} \zkp - \zsol 
= \lim_{k\to\pinf} \pa{ \Id - \proj_{S} }^{k+1-K} (z_{K} - \zsol)  
&= \pa{ \Id - \proj_{S} } (z_{K} - \zsol) 
= \zkp - \zsol  ,
\end{aligned}
\]
which means $\zk = \zsol$ for all $k > K$, hence finite termination of $\zk$. 
The finite convergence of $\xk$ follows naturally that of $\zk$, and we conclude the proof. 
\end{proof}


\paragraph{Different order of update}
In \eqref{eq:dr}, the order of the projection operators can be switched which results in the following iteration
\beq\label{eq:dr_diff}
\begin{aligned}
	\xkp &\in \proj_{C} (\zk)  , \\
	\ukp &= \proj_{S}\pa{2\xkp - \zk} , \\
	\zkp &= \zk + \ukp - \xkp  .
\end{aligned}
\eeq
The corollary below shows that the finite termination holds for \eqref{eq:dr_diff}. 

\begin{corollary}\label{cor:local_dr}
For the feasibility problem \eqref{eq:feasibility} and the Douglas--Rachford iteration \eqref{eq:dr_diff}, suppose Assumptions \iref{A:C}-\iref{A:D} hold. 
Then $\seq{\uk,\xk,\zk}$ converges to $(\xsol,\xsol, \zsol)$ with $\zsol \in \fix(\fDR)$ being a fixed point and $\xsol \in \proj_{C}(\zsol)$. 
If, moreover, $C$ is prox-regular at $\xsol$ for $\zsol-\xsol$ and the following non-degeneracy condition holds, 
\beq\label{eq:ndc_diff}
-(\xsol-\zsol) \in \mathrm{int}\Pa{ \normal{C}\pa{\xsol} } ,
\eeq
then $\seq{\uk,\xk,\zk}$ converges to $\pa{\xsol,\xsol,\zsol}$ in a finite number of iterations.
\end{corollary}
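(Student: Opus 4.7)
The plan is to mirror the structure of the proof of Theorem \ref{thm:local_dr}, but now exploit the non-degeneracy condition \eqref{eq:ndc_diff} at the first step of the iteration rather than the second. The global convergence of standard DR is invariant under swapping the projection order (up to relabeling the fixed-point operator as $\frac{1}{2}((2\proj_S - \Id)(2\proj_C - \Id) + \Id)$), so Assumptions \iref{A:C}--\iref{A:D} still yield $\zk \to \zsol$ and $\uk, \xk \to \xsol$ with $\xsol \in \proj_C(\zsol)$ and $\xsol \in S$. This is the starting point.

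The first substantive step is to show finite termination of $\xk$. The prox-regularity of $C$ at $\xsol$ for $\zsol - \xsol$, together with $\zsol - \xsol = -(\xsol - \zsol) \in \mathrm{int}(\normal{C}(\xsol))$, guarantees an open neighborhood $\calB$ of $\zsol$ on which $\proj_C$ is single-valued and constantly equal to $\xsol$. This is exactly the situation exploited in the previous proof, only now applied directly to the sequence $\zk$ rather than to $2\xkp - \zk$. Since $\zk \to \zsol$, there exists $K \in \bbN$ with $\zk \in \calB$ for all $k \geq K$, so $\xkp = \proj_{C}(\zk) = \xsol$ for $k \geq K$.

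From here the argument for $\zk$ becomes a short linear-algebra computation. Writing $S = \xsol + T_S$, the identity $\proj_S(x) = \xsol + \proj_{T_S}(x - \xsol)$ and $\xkp = \xsol$ give
\[
\ukp = \proj_S(2\xsol - \zk) = \xsol - \proj_{T_S}(\zk - \xsol),
\qquad
\zkp = \zk - \proj_{T_S}(\zk - \xsol).
\]
The fixed-point identity at $\zsol$ forces $\proj_{T_S}(\zsol - \xsol) = 0$, i.e.\ $\zsol - \xsol \in T_S^\perp$; subtracting then gives
\[
\zkp - \zsol = (\Id - \proj_{T_S})(\zk - \zsol)
\]
for $k \geq K$. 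Since $\Id - \proj_{T_S}$ is the orthogonal projection onto $T_S^\perp$, it is idempotent, so $z_{k+j} - \zsol$ is constant in $j \geq 1$; combined with $z_{k+j} \to \zsol$ this forces $\zkp = \zsol$ for $k \geq K$. Finite termination of $\ukp$ follows immediately from $\ukp = \zkp + \xkp - \zk = \xsol$.

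I do not expect any genuine obstacle here: the only delicate point is verifying that the non-degeneracy condition, which now has the opposite sign compared to \eqref{eq:ndc}, is precisely what is needed so that $\proj_C(\zsol) = \xsol$ holds on a whole neighborhood of $\zsol$ — and this is automatic from the sign of $\zsol - \xsol$ in the interior of $\normal{C}(\xsol)$ combined with prox-regularity, exactly as in Theorem \ref{thm:local_dr}.
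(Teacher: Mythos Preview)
Your proposal is correct and follows essentially the same route as the paper: establish finite termination of $\xk$ via the non-degeneracy condition \eqref{eq:ndc_diff}, then reduce the $\zk$-recursion to $\zkp-\zsol=(\Id-\proj_{T_S})(\zk-\zsol)$ and use idempotency together with $\zk\to\zsol$ to force finite termination. The only difference is cosmetic---you make the linear part $T_S$ explicit, whereas the paper writes $\proj_S$ directly on differences---and your final line deducing $\ukp=\xsol$ from the update rule is off by one step (it holds for $k\geq K+1$, not $k=K$), but this does not affect the finite-termination conclusion.
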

\begin{proof}
Following the argument of the proof of Theorem \ref{thm:local_dr}, we can easily derive the finite termination of $\xk$ under the new non-degeneracy condition \eqref{eq:ndc_diff}. 
In turn, for $k$ large enough, we have for $\ukp$ that
\[
\begin{aligned}
\ukp - \xsol
&= \proj_{S}\pa{2\xkp - \zk} - \proj_{S}\pa{2\xsol - \zsol}  \\
&= 2\proj_{S}\pa{\xkp - \xsol} - \proj_{S}\pa{\zk - \zsol} \\
&= - \proj_{S}\pa{\zk - \zsol}  .
\end{aligned}
\]
As a result for $\zk$,
\[
\begin{aligned}
\zkp - \zsol
= (\zk-\zsol) + (\ukp-\xsol) - (\xkp - \xsol) 
&= (\zk-\zsol) + (\ukp-\xsol)  \\
&= \pa{\Id - \proj_{S}} (\zk-\zsol) ,
\end{aligned}
\]
which is the same as the last part of proof of Theorem \ref{thm:local_dr}, hence we conclude the proof. 
\end{proof}

\subsubsection{The damped Douglas--Rachford splitting}

We now turn to the local convergence analysis of the damped Douglas--Rachford splitting \eqref{eq:ddr}, for which we have the following result.

\begin{theorem}[Local convergence of dDR]\label{thm:local_ddr}
For the feasibility problem \eqref{eq:feasibility} and the damped Douglas--Rachford iteration \eqref{eq:ddr}, suppose that Assumptions \iref{A:C}-\iref{A:intersection} hold and \eqref{eq:ddr} is ran under the conditions of Theorem~\ref{thm:global_convergence}, then $\pa{\uk,\xk,\zk} \to (\xsol,\xsol, \zsol)$ with $\zsol \in \fix(\fdDR)$ being a fixed point and $\xsol$ a stationary point of \eqref{eq:dist_C}. 
If, moreover, condition \eqref{eq:ndc} holds, then 
\begin{enumerate}[label={\rm (\roman{*})}]
\item $\uk$ converges in finite number of iterations. 
$\uk = \xsol $.

\item Let $\eta = \frac{\gamma}{1+\gamma}$, it holds
$\norm{\zk - \zsol} = O(\eta^k) $.
\end{enumerate}
\end{theorem}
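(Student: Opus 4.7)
The plan is to combine the prox-regularity/non-degeneracy argument from Theorem \ref{thm:local_dr} with a direct analysis of the linear recursion that governs $\zk$ once $\uk$ locks onto $\xsol$.

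First I would apply Lemma \ref{thm:global_convergence} to obtain that the sequence $\seq{\uk,\xk,\zk}$ is bounded and that every cluster point has the form $(\xsol,\xsol,\zsol)$ with $\xsol$ a stationary point of \eqref{eq:dist_C} and $\zsol\in\fix(\fdDR)$. Fix such a cluster point satisfying \eqref{eq:ndc}, and let $(z_{k_j})_j$ be a subsequence converging to $\zsol$; continuity of $\proj_S^{\gamma/(1+\gamma)}$ then forces $x_{k_j+1}\to\xsol$. As in the proof of Theorem \ref{thm:local_dr}, prox-regularity of $C$ at $\xsol$ for $\xsol-\zsol$ together with \eqref{eq:ndc} yields an open set $\calB \ni 2\xsol-\zsol$ on which $\proj_C$ is single-valued and identically equal to $\xsol$. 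Choosing $K=k_j$ large enough that $2x_{K+1}-z_K\in\calB$ gives $u_{K+1}=\xsol$.

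Once $u_{k+1}=\xsol$, the update collapses to
\[
z_{k+1} = z_k + \xsol - \proj_S^{\gamma/(1+\gamma)}(z_k) = \tfrac{\gamma}{1+\gamma}\bigl(z_k-\proj_S(z_k)\bigr) + \xsol.
\]
Since $S$ is affine with direction $L_S$, one has $\proj_S(a)-\proj_S(b) = \proj_{L_S}(a-b)$. Setting $P^\perp := \Id-\proj_{L_S}$ and subtracting the fixed-point identity for $\zsol$ produces the linear recursion
\[
z_{k+1}-\zsol = \tfrac{\gamma}{1+\gamma}\,P^\perp(z_k-\zsol),
\]
so that $\norm{z_{k+1}-\zsol}\leq\eta\norm{z_k-\zsol}$ with $\eta = \tfrac{\gamma}{1+\gamma} \in \,]0,1[$, because $P^\perp$ is an orthogonal projection. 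This is the rate announced in (ii), provided the lock-on $\uk=\xsol$ propagates for all $k>K$.

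Making that propagation rigorous is the delicate step and the main obstacle: Lemma \ref{thm:global_convergence} only supplies subsequential cluster points, so a single lock-on event at time $K$ is not automatically inherited by later iterations. The plan is to strengthen the choice of $K$ so that $z_K$ lies in a ball $\calB_\delta$ around $\zsol$ small enough that $2\proj_S^{\gamma/(1+\gamma)}(z) - z \in \calB$ for every $z\in\calB_\delta$; the contraction above then makes $\calB_\delta$ forward invariant, so by induction $u_{k+1}=\xsol$ for all $k\ge K$, yielding simultaneously (i) finite termination of $\uk$ and (ii) the geometric rate $\norm{\zk-\zsol}=O(\eta^k)$. Full convergence $\xk\to\xsol$ then follows from continuity of $\proj_S^{\gamma/(1+\gamma)}$. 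The reason the bootstrap goes through at all is that after lock-on the dynamics become a genuine linear contraction with rate $\eta<1$, which makes small neighborhoods of $\zsol$ forward invariant and renders the finite-termination property self-reinforcing.
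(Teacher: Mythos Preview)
Your approach is essentially the paper's: once $u_{k+1}=\xsol$, both you and the paper derive the linear recursion $z_{k+1}-\zsol=\tfrac{\gamma}{1+\gamma}(\Id-\proj_{L_S})(z_k-\zsol)$ and read off the rate $\eta=\tfrac{\gamma}{1+\gamma}$ from the fact that $\Id-\proj_{L_S}$ is an orthogonal projection (the paper phrases this via the spectral radius of the symmetric matrix $\tfrac{\gamma}{1+\gamma}(\Id-\proj_S)$ and Lemma~\ref{lem:rate-M}).

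The one substantive difference is your treatment of the lock-on. The paper simply invokes the argument of Theorem~\ref{thm:local_dr}, which tacitly uses \emph{full} convergence $z_k\to\zsol$; from that, $2x_{k+1}-z_k\to 2\xsol-\zsol$ and finite termination of $\uk$ is immediate. You instead start from the subsequential conclusion of Lemma~\ref{thm:global_convergence} and close the gap with a forward-invariance argument: choose $\delta$ so small that $z\in\calB_\delta$ forces $2\proj_S^{\gamma/(1+\gamma)}(z)-z\in\calB$, use one subsequence index to enter $\calB_\delta$, and then let the contraction $\norm{z_{k+1}-\zsol}\le\eta\norm{z_k-\zsol}$ trap the whole tail there. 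This is correct and in fact slightly more self-contained than the paper's write-up, since it upgrades subsequential convergence to full convergence as a by-product rather than assuming it.
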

\begin{proof}
The finite convergence of $\uk$ follows the argument of the proof of Theorem \ref{thm:local_dr}. 
For the update of $\xk$ in \eqref{eq:ddr}, since $S$ is an affine subspace, $\proj_{S}$ is~linear
\[
\begin{aligned}
\xkp - \xsol 
&= \sfrac{1}{1+\gamma} \Pa{ \zk + \gamma \proj_{S}(\zk) } - \xsol  \\
&= \sfrac{1}{1+\gamma} \Pa{ \zk + \gamma \proj_{S}(\zk) } - \sfrac{1}{1+\gamma} \Pa{ \xsol + \gamma \proj_{S}(\zsol) } \\
&= \sfrac{1}{1+\gamma} (\zk - \xsol) + \sfrac{\gamma}{1+\gamma} \proj_{S}(\zk - \zsol). 
\end{aligned}
\]
Now for $\zkp$, let $K > 0$ be such that $\uk = \xsol$ for all $k \geq K$, we have
\[
\begin{aligned}
\zkp - \zsol
&= (\zk-\zsol) + (\ukp-\usol) - (\xkp - \xsol)	\\ 
&= (\zk-\zsol) - (\xkp - \xsol) \\
&= (\zk-\zsol) - \sfrac{1}{1+\gamma} (\zk - \xsol) - \sfrac{\gamma}{1+\gamma} \proj_{S}(\zk - \zsol) \\
&= \sfrac{\gamma}{1+\gamma} \pa{ \Id -  \proj_{S} } (\zk - \zsol)  .
\end{aligned}
\]
Note that the spectral radius of the matrix appears above is
\[
\rho\Pa{ \tfrac{\gamma}{1+\gamma}(\Id-\proj_{S}) } = \sfrac{\gamma}{1+\gamma}  .
\]
Combined with the fact the matrix is symmetric and normal, owing to Lemma \ref{lem:rate-M} we conclude $\frac{\gamma}{1+\gamma}$ is the local linear convergence rate of $\norm{\zk-\zsol}$. 
\end{proof}

\begin{remark}
In \cite{li2016douglas}, the authors also discuss the local linear convergence of damped DR under the following constraint qualification condition
\beq\label{eq:cq}
\normal{S}\Pa{\proj_{S}(\xsol)} \mcap - \normal{C}(\xsol) = 0 .
\eeq
As shown in \cite[Proposition 2]{li2016douglas}, such a condition allows to show $\xsol \in C \cap S$ and $\zsol = \xsol$; See Example \ref{exp:circle_line} which satisfies the above condition. The update of $\xkp$ in \eqref{eq:ddr} yields
\[
\begin{aligned}
\sfrac{1+\gamma}{\gamma} \pa{ \zsol - \xsol } = {\zsol -  \proj_{S} (\zsol)} &\in \normal{S}\Pa{\proj_{S}(\xsol)}  , \\
\zsol - \xsol &\in - \normal{C}\pa{\xsol} .
\end{aligned}
\]
This implies that only the fixed-points $\zsol$ such that $\zsol = \xsol$ satisfy the qualification condition \eqref{eq:cq}. 
In comparison, our non-degeneracy condition is more general than \eqref{eq:cq} in the sense that we only focus on $\normal{C}\pa{\xsol}$ and does not need the intersection of $\normal{S}\pa{\proj_{S}(\xsol)} \mcap - \normal{C}(\xsol)$ to be $0$, and our result holds for all fixed-points of $\fix(\fdDR)$. 
\end{remark}

\begin{remark}
When $S$, instead of being an affine subspace, has locally smooth curvature around $\xsol$, then according to the result of \cite{liang2017local}, one can show that for any $\eta \in ]\frac{\gamma}{1+\gamma}, 1[$ there holds $\norm{\zk-\zsol} = O(\eta^k)$.
\end{remark}

\subsection{Sudoku and $s$-queens puzzles}\label{sec:puzzles}

In this part, we specialize the above result to Sudoku and $s$-queens puzzles. Examples of these two puzzles are provided in Figure \ref{fig:puzzles} below. 
%

\begin{figure}[!ht]
	\centering
	\subfloat[Sudoku puzzle]{ \includegraphics[width=0.35\linewidth]{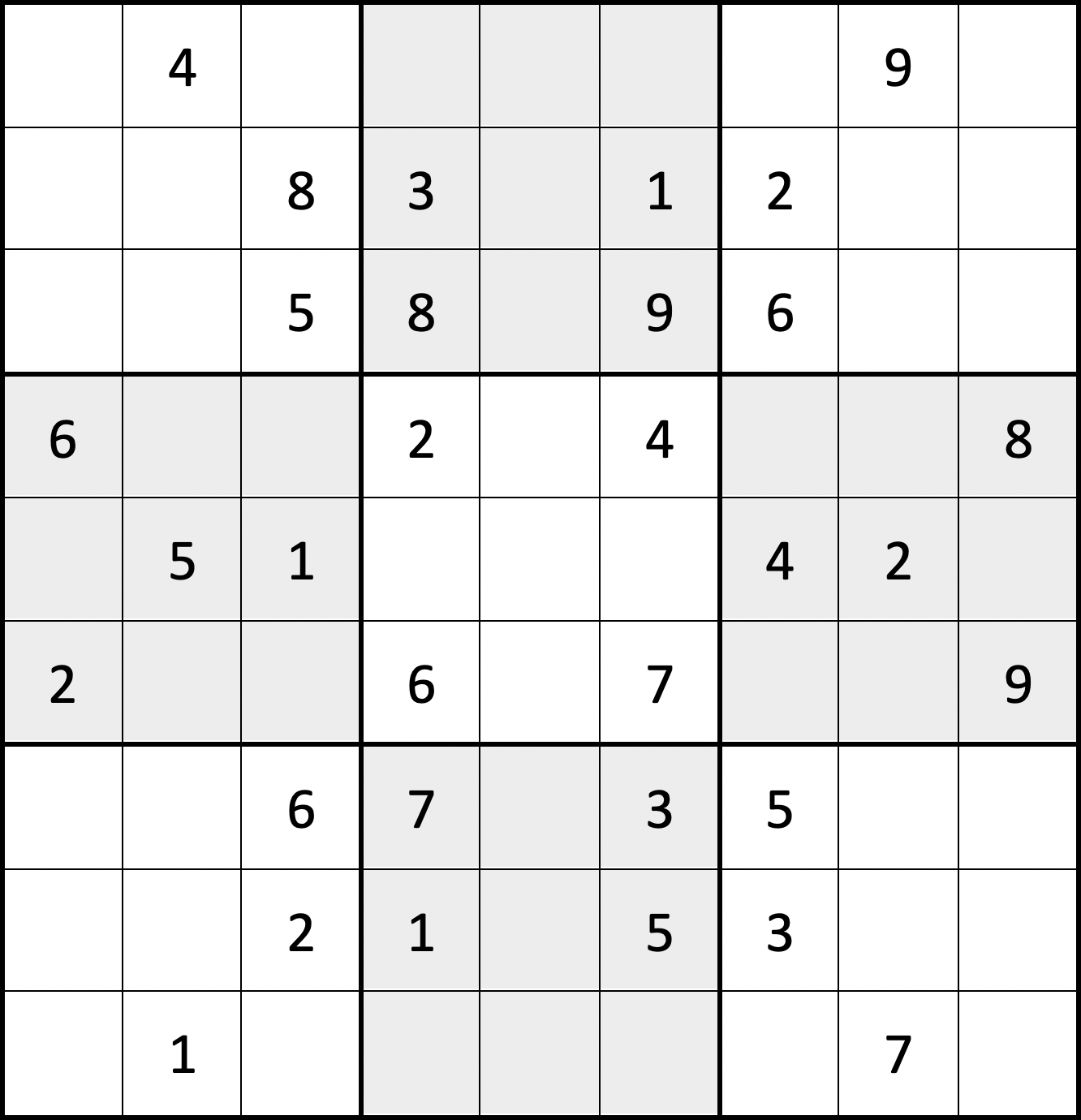} } \hspace{6pt}
	\subfloat[Eight queens puzzle]{ \includegraphics[width=0.362\linewidth]{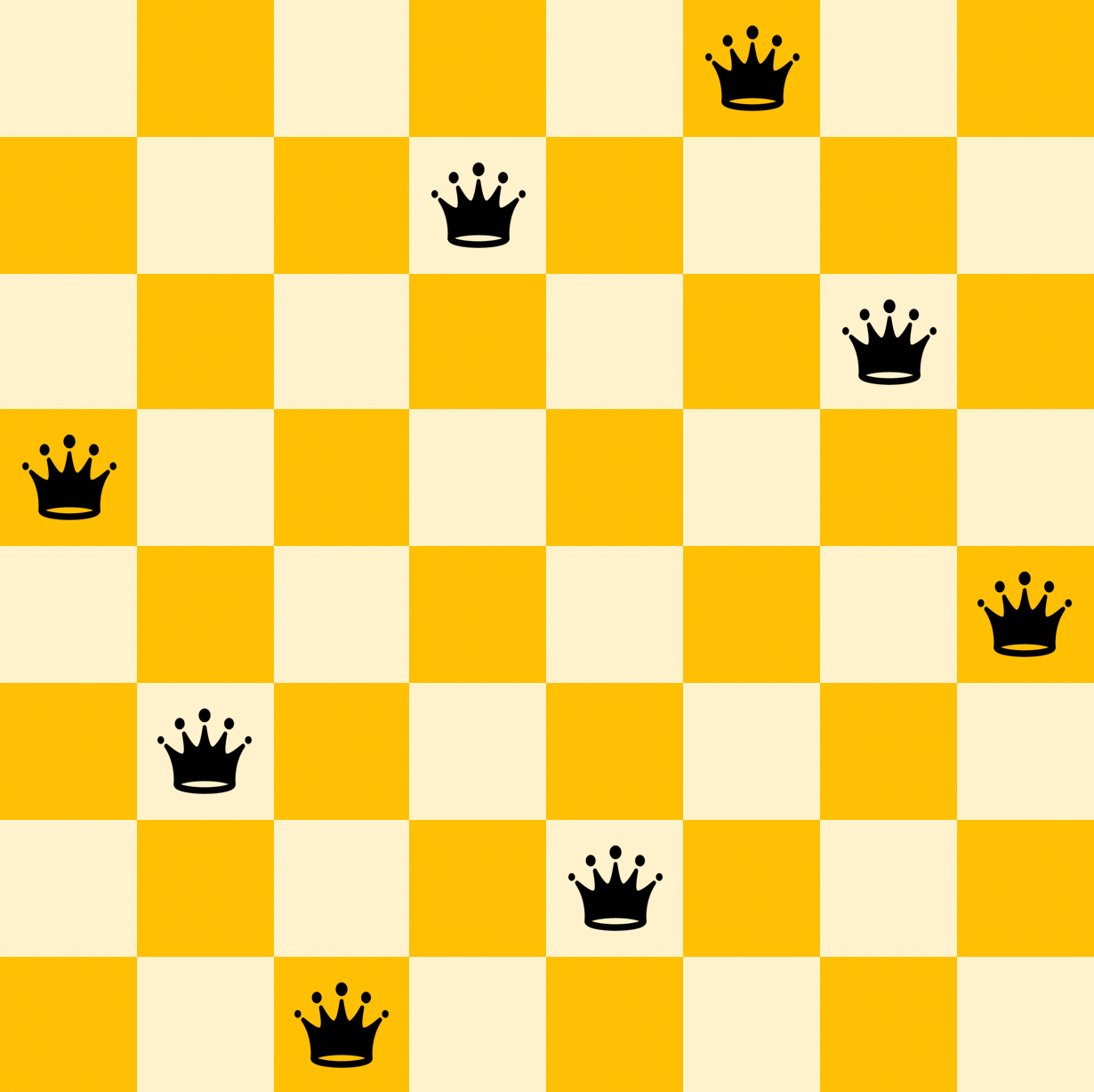} } \\
	\caption{Examples of Sudoku and eight queens. The goal of Sudoku is to complete the grid such that each row, column, and $3\times3$ square contains all the digits from $1$ to $9$. The goal of eight queens is to place eight chess queens on an $8\times8$ board such that no two queens share the same row/column/diagonal.} 
	\label{fig:puzzles}
\end{figure}


\subsubsection{Sudoku puzzle}

A standard Sudoku puzzle is shown in Figure \ref{fig:puzzles} (a), which we generalize to grids of size $s\times s$ with the basic setting and rules: 
\begin{itemize}
\item A partially complete $s\times s$ grid is provided. 
\item Each {\it column}, each {\it row} and each of the {\it $s$ sub-grids of size $\sqrt{s}\times\sqrt{s}$} that compose the grid contain all of the digits from $1$ to $s$. 
\end{itemize}
Based on the rules, we can easily formulate the Sudoku puzzle as feasibility problem. Here we consider the formulation proposed in \cite{schaad2010modeling}, which formulates Sudoku as binary feasibility problem. We also refer to \cite{artacho2014recent} for studies on Sudoku puzzle and Douglas--Rachford splitting method. 

Each digit from $1$ to $s$ is lifted to the set $[0,1]^s$, making the full puzzle an $s\times s\times s$ binary cube. Figure~\ref{fig:lift} (a) shows a feasible row of the lifted problem represented as a binary $s\times s$ square. Equivalently, we can say that any digit from $1$ to $s$ is a permutation of unit vector $e = \ba{1,0,\ldots,0}$. This leads to four Sudoku feasibility constraints:
\begin{itemize}
	\item Each {\it row} of the cube, \ie $C_1(:, j,k),\ j,k \in \ba{1,\ldots,s}$, is the permutation of $e$; See Figure \ref{fig:lift} (b).
	\item Each {\it column} of the cube, \ie$C_2(i,:,k),\ i,k \in \ba{1,\ldots,s}$, is the permutation of $e$; See Figure \ref{fig:lift} (c).
	\item Each {\it pillar} of the cube, \ie$C_3(i, j, :),\ i,j \in \ba{1,\ldots,s}$, is the permutation of $e$; See Figure \ref{fig:lift} (d).
	\item For each $k \in \ba{1,\ldots,s}$, each of the {\it $s$ sub-grids} is the permutation of $e$, \ie $C_4(\sqrt{s}(i-1)+1:\sqrt{s}i, \sqrt{s}(j-1)+1:\sqrt{s}j, k),\ i,j \in \ba{1,\ldots,\sqrt{s}}$; See Figure \ref{fig:lift} (e).
\end{itemize}
The partially completed grid forms the last constraint set
\begin{itemize}
\item $C_5$ is the constraint of the provided numbers.
\end{itemize}

\begin{figure}[!ht]
	\centering
	\subfloat[Lifted row]{ \includegraphics[width=0.19\linewidth]{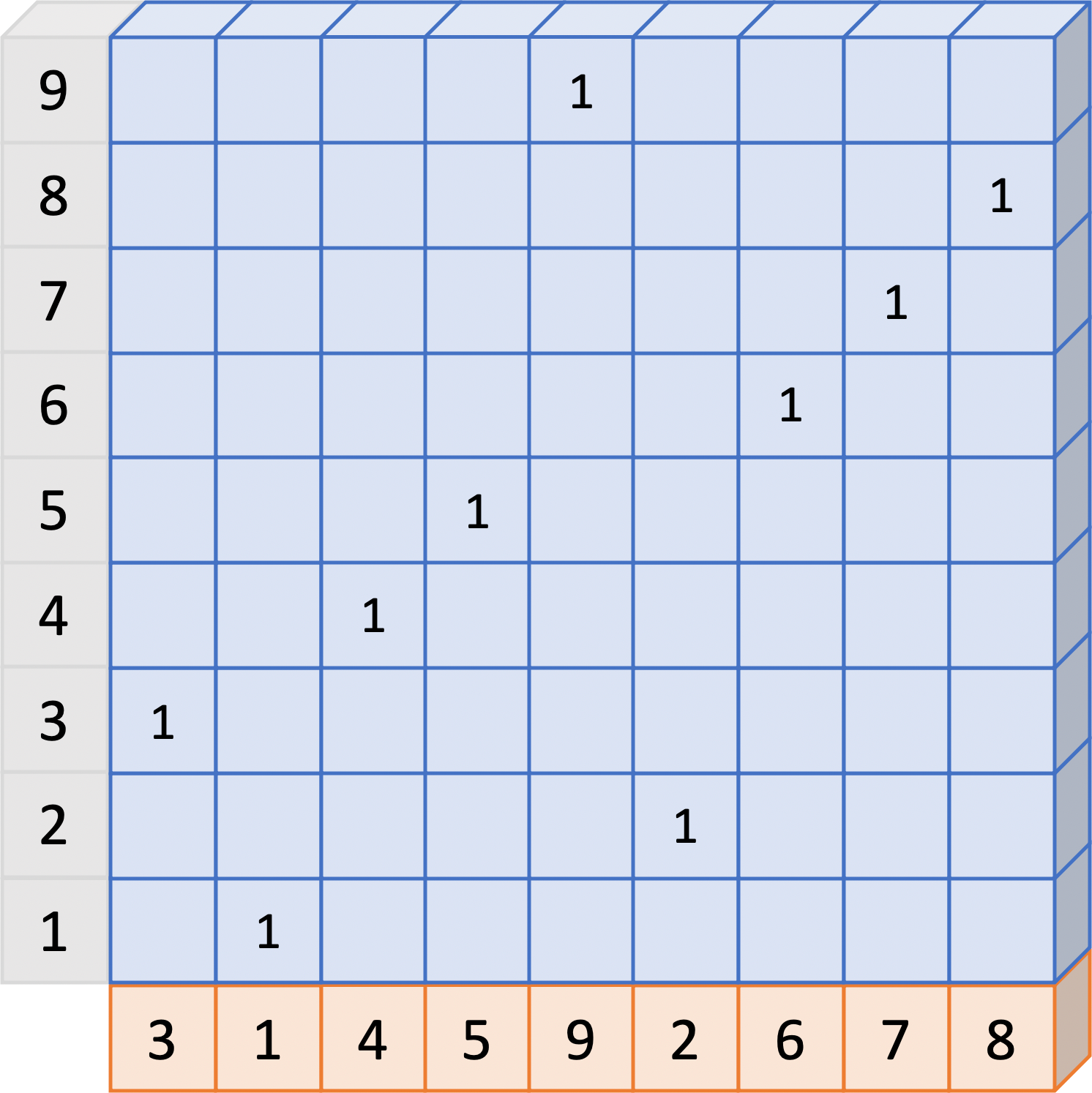} } \hspace{-8pt}
	\subfloat[$C_1$]{ \includegraphics[width=0.19\linewidth]{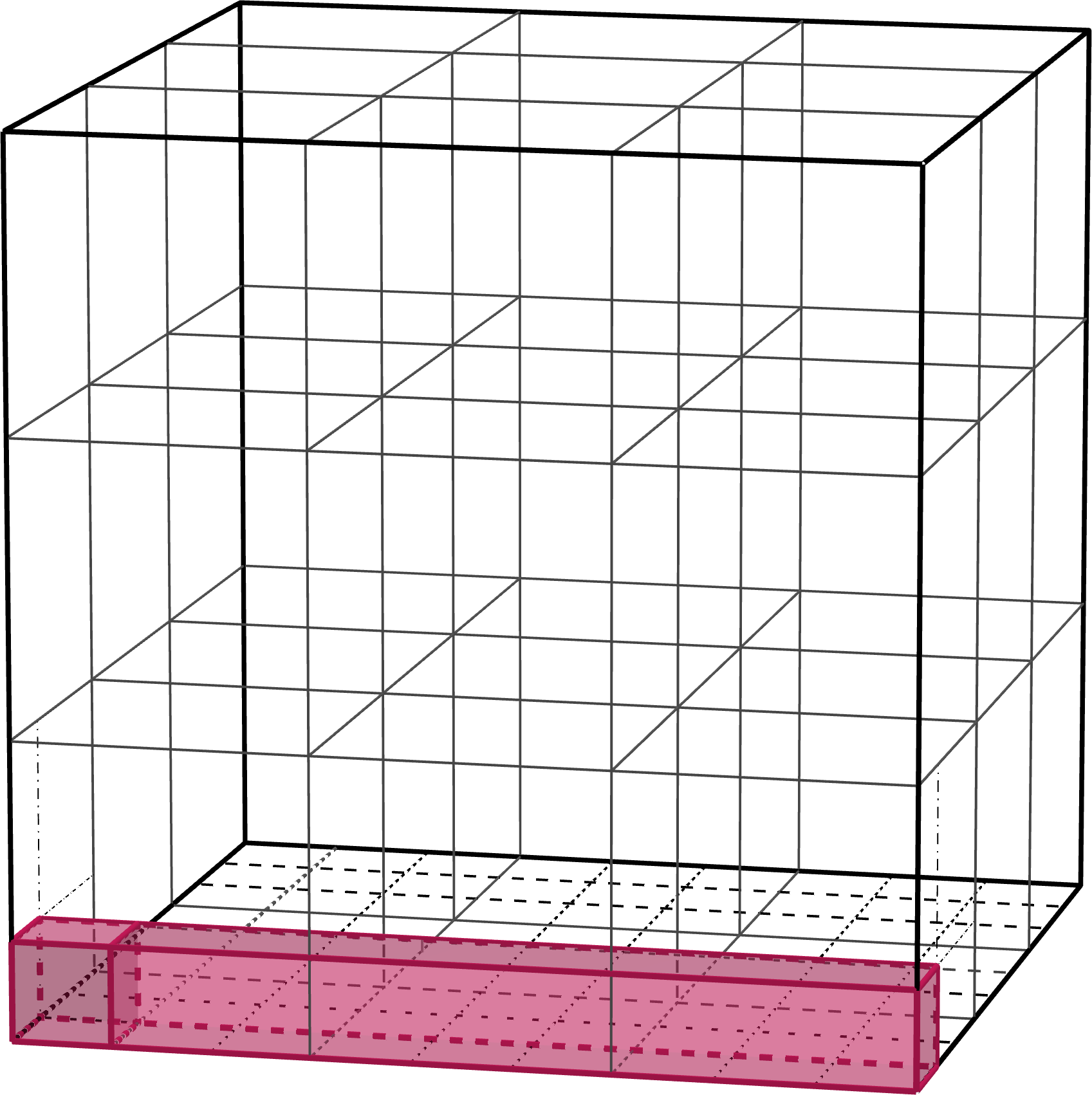} } \hspace{-8pt}
	\subfloat[$C_2$]{ \includegraphics[width=0.19\linewidth]{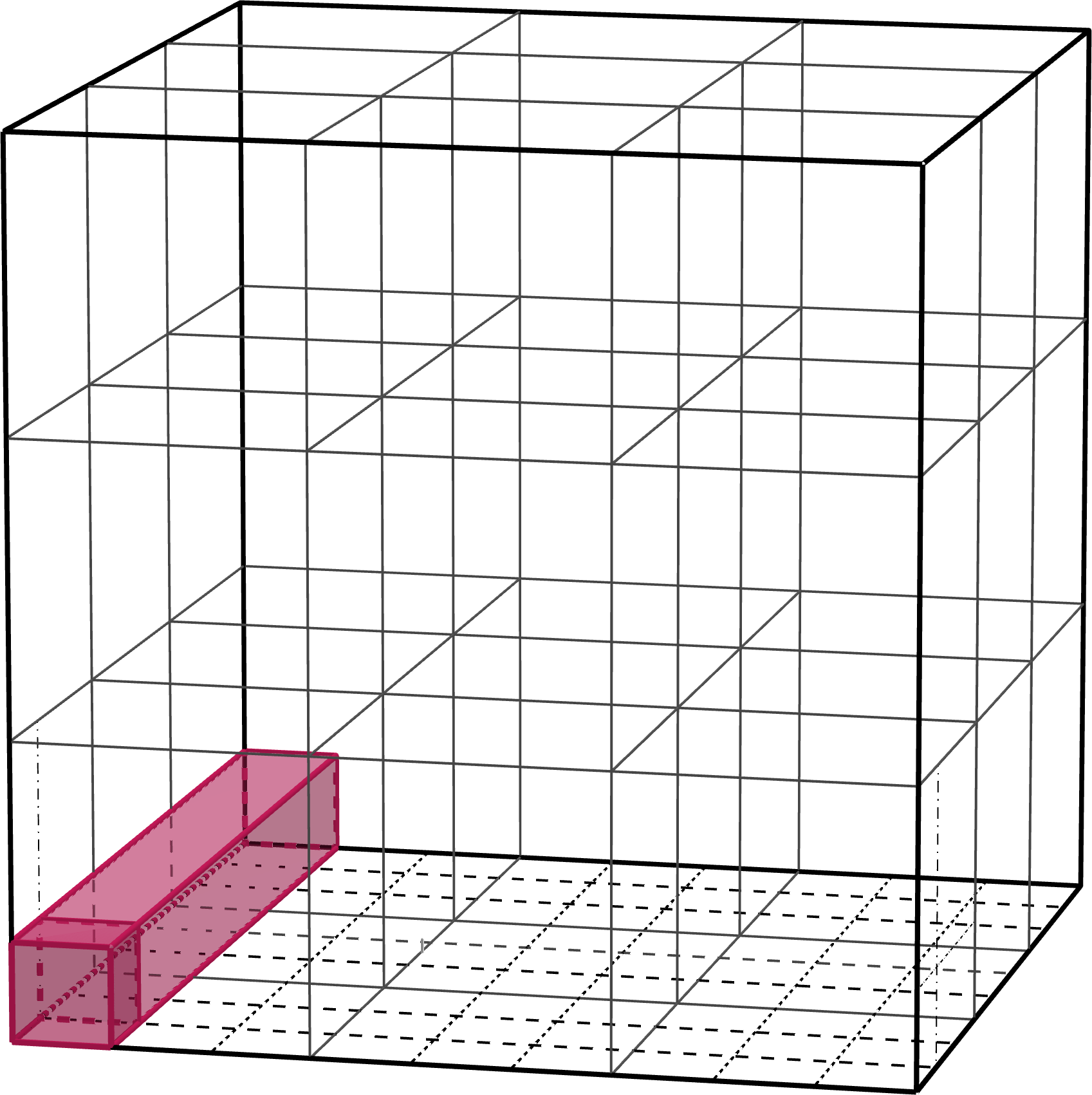} } \hspace{-8pt}
	\subfloat[$C_3$]{ \includegraphics[width=0.19\linewidth]{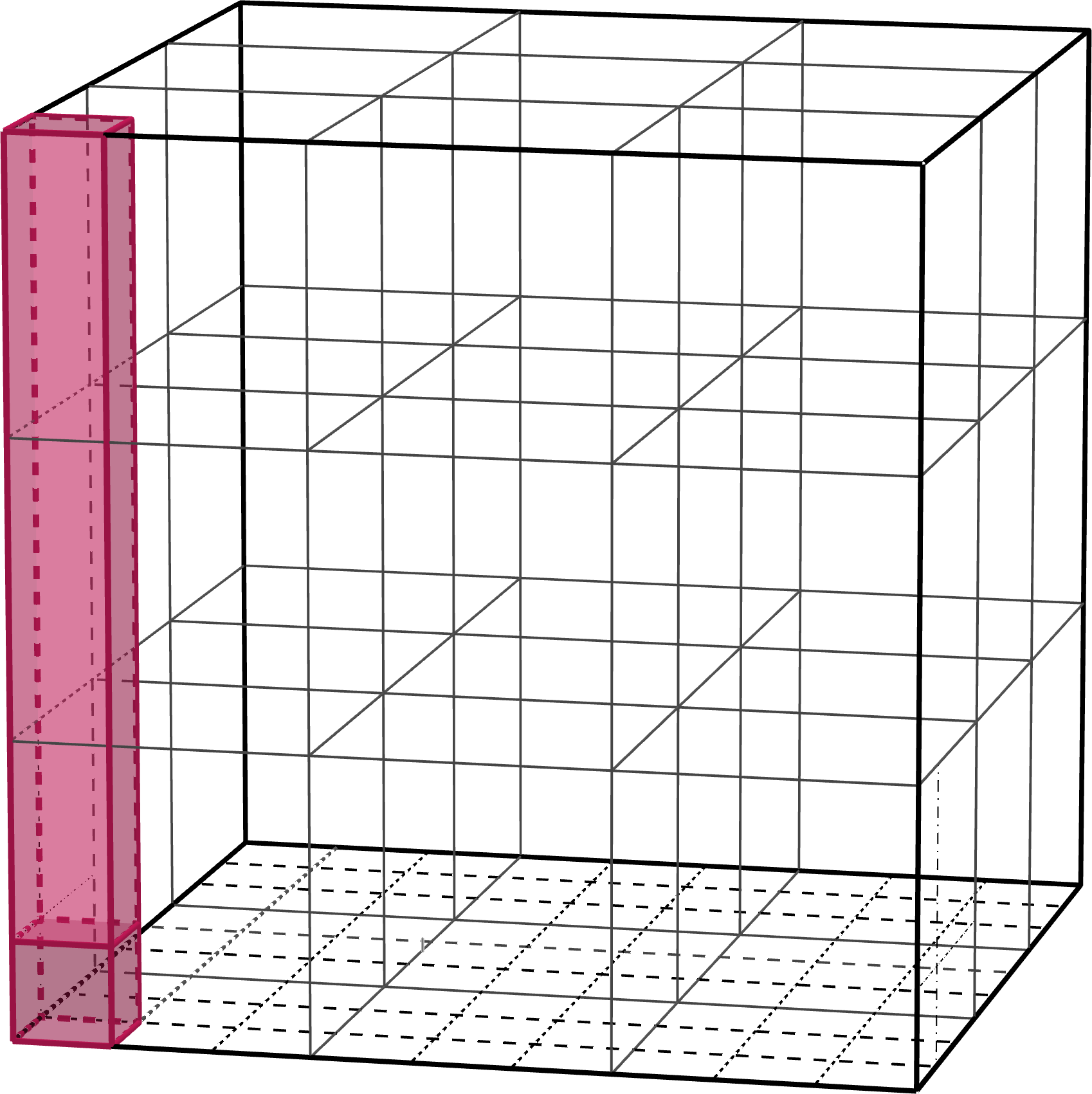} } \hspace{-8pt}
	\subfloat[$C_4$]{ \includegraphics[width=0.19\linewidth]{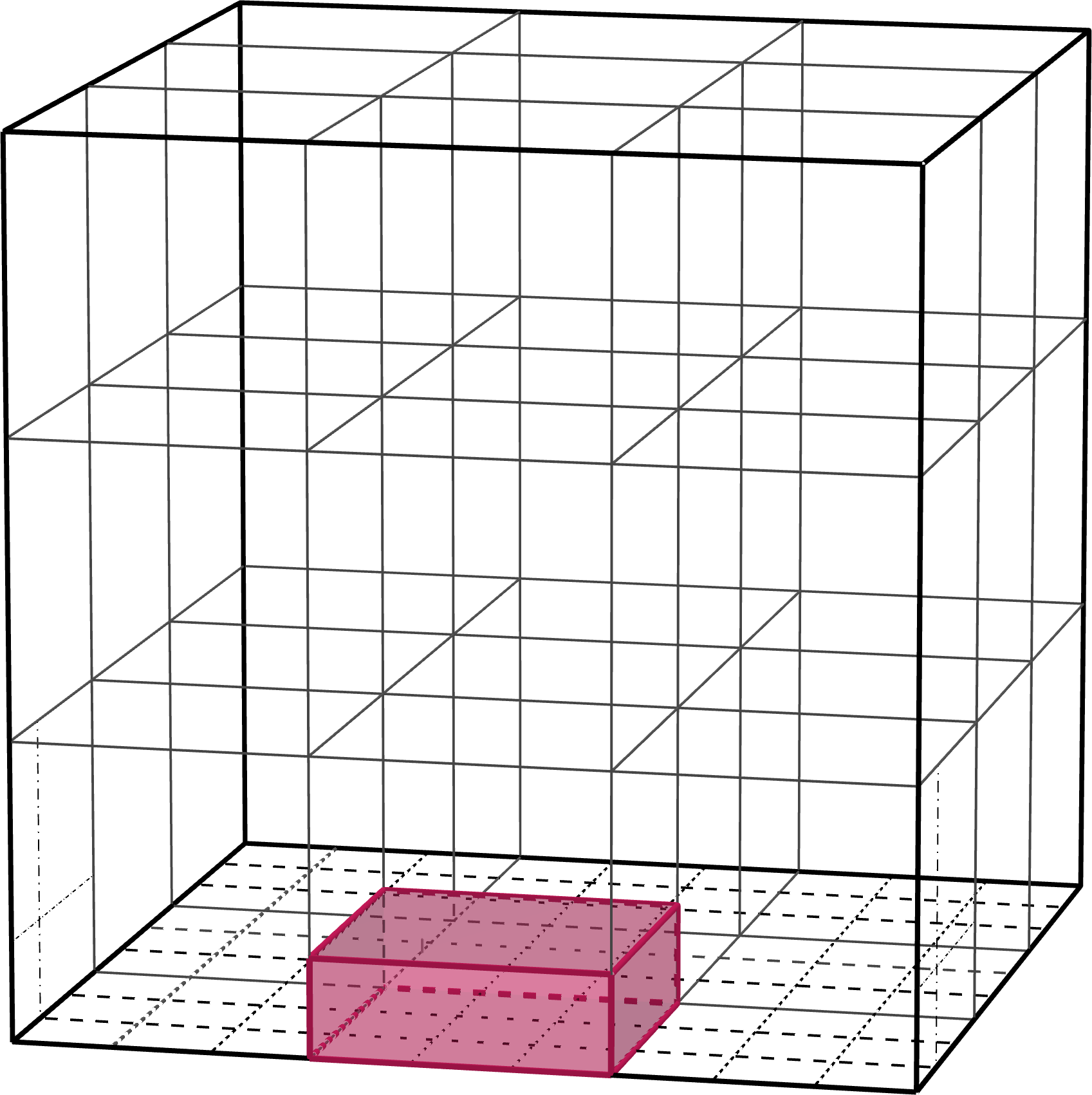} } \\
	\caption{Lifted Sudoku problem. (a) the lifted representation of a row of numbers. (b)-(e) show what is a {\it lifted row/column/pillar/sub-grid} respectively.} 
	\label{fig:lift}
\end{figure}

At this point, solving the Sudoku puzzle is equivalent to solve the following feasibility problem of the five constraint sets
\beq\label{eq:sudoku_5set}
\find ~~x~ \in \bbR^{s\times s\times s} \enskip \st \enskip  x \in C_1 \cap C_2 \cap C_3 \cap C_4 \cap C_5 .
\eeq
To obtain the product space formulation, let $\bcH=\underset{\textrm{$5$ times}}{\underbrace{\bbR^{s\times s\times s} \times \cdots \times \bbR^{s\times s\times s}}}$, $\bcC \eqdef C_{1} \times \dotsm \times C_{5}$, and $\bcS = \ba{\bmx=(x_i)_i\in\bcH : x_1=\dotsm=x_5}$.


\begin{proposition}[Local convergence of DR]\label{prop:sudoku_dr}
For the Sudoku puzzle \eqref{eq:sudoku_5set} and Douglas--Rachford splitting \eqref{eq:dr_msets}, suppose Assumptions \iref{A:C}-\iref{A:D} hold. 
Then $\seq{\bmu_{k},\xk,\bmz_{k}}$ converges to $(\bcK(\xsol),\xsol, \bmz^\star)$ with $\bmz^\star \in \fix(\fDR)$ being a fixed point and $\xsol = \frac{1}{5}\sum_{i=1}^{5} z_i^\star$. 
If, moreover, for $i=1,\ldots,4$, $C_i$ is prox-regular at $\xsol$ for $\xsol - z_{i}^\star$ and the following non-degeneracy condition holds 
\beq\label{eq:ndc_Ci}
\xsol - z_i^\star \in \mathrm{int}\Pa{ \normal{C_i}\pa{\xsol} } .
\eeq
Then for all $k$ large enough, there holds
\begin{itemize}
\item 
$u_{i,k} = \xsol$ for $i=1,\ldots,4$, 
\item 
$\norm{\bmz_k-\bmz^\star} = O(\eta^k)$ with $\eta = \frac{\sqrt{5}}{5}$. 
\end{itemize}
\end{proposition}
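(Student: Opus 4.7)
My strategy is to adapt the finite-termination argument of Theorem \ref{thm:local_dr} set-by-set, then linearise the resulting iteration and exploit the block structure of the five-set product-space formulation. First, for the finite convergence of $u_{i,k}$ with $i \in \{1,2,3,4\}$: Assumption \iref{A:D} gives $\bmz_k \to \bmz^\star$, so $2x_{k+1} - z_{i,k} \to 2\xsol - z_i^\star$. The prox-regularity of $C_i$ at $\xsol$ together with the non-degeneracy condition \eqref{eq:ndc_Ci} supplies an open neighbourhood of $2\xsol - z_i^\star$ on which $\proj_{C_i}$ is single-valued and identically equal to $\{\xsol\}$; exactly as in the proof of Theorem \ref{thm:local_dr}, this forces $u_{i,k+1} = \xsol$ for all sufficiently large $k$. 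The same step cannot be applied to $i=5$ because $C_5$ is a proper affine subspace whose normal cone has empty interior, which is precisely why \eqref{eq:ndc_Ci} is not imposed there.

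After the finite-termination index $K$ I would linearise the remaining dynamics. Writing $\tilde z_{i,k} = z_{i,k} - z_i^\star$ and $\bar{\tilde z}_k = \tfrac{1}{5}\sum_{j=1}^5 \tilde z_{j,k}$, for $k \geq K$ the update \eqref{eq:dr_msets} collapses to
\begin{equation*}
\tilde z_{i,k+1} = \tilde z_{i,k} - \bar{\tilde z}_k \quad (i=1,\ldots,4), \qquad
\tilde z_{5,k+1} = (\Id - L_5)\tilde z_{5,k} + (2L_5 - \Id)\bar{\tilde z}_k,
\end{equation*}
where $L_5$ is the linear part of $\proj_{C_5}$ (the affine offset cancels via the fixed-point identity $\xsol = \proj_{C_5}(2\xsol - z_5^\star)$). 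Calling this linear operator $T$, one has $\bmz_{k+1} - \bmz^\star = T(\bmz_k - \bmz^\star)$.

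The main technical step is the spectral analysis of $T$, which I expect to be the heart of the proof. I would split $\bbR^n = V_1 \oplus V_0$ with $V_1 = \mathrm{range}(L_5)$ and $V_0 = \ker(L_5)$; $T$ respects $V_1^5 \oplus V_0^5$, and each factor decomposes further into the symmetric subspace $W_A = \{(a,a,a,a,b)\}$ and its orthogonal complement $W_B = \{(v_1,\ldots,v_4,0) : \sum_{i=1}^4 v_i = 0\}$. On $W_B$ the operator acts as the identity, producing eigenvalue $1$ along tangent directions of $\fix(\fDR)$; these modes are harmless because $\bmz_k \to \bmz^\star$ forces the $W_B$-component of the error to vanish. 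On $W_A \cap V_0^5$ the restriction is a rank-one perturbation with spectrum $\{0,1\}$, while on $W_A \cap V_1^5$ it reduces, in the $(a,b)$-basis, to
\begin{equation*}
\begin{pmatrix} \tfrac{1}{5} & -\tfrac{1}{5} \\ \tfrac{4}{5} & \tfrac{1}{5} \end{pmatrix} ,
\end{equation*}
whose characteristic polynomial $5\lambda^2 - 2\lambda + 1 = 0$ has roots $\lambda = \tfrac{1 \pm 2i}{5}$ of modulus exactly $\tfrac{\sqrt{5}}{5}$.

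Gathering all non-unit eigenvalues yields $\rho(T - T^\infty) = \tfrac{\sqrt{5}}{5}$, and the extremal eigenvalues are simple within the $2 \times 2$ block and hence semi-simple on $T$, so Lemma \ref{lem:rate-M}(iii) delivers $\norm{\bmz_k - \bmz^\star} = O(\eta^k)$ with $\eta = \tfrac{\sqrt{5}}{5}$. Puzzle-size independence is transparent: $L_5$ only affects the multiplicities of the spectral components, not their moduli, and the universal $2 \times 2$ block is fixed purely by the number of sets being five. The subtlest point I anticipate is justifying that the $W_B$ eigenvalue-$1$ modes can be discarded, which relies on the global-convergence hypothesis pinning down the single limit $\bmz^\star$ rather than a one-parameter family of candidates.
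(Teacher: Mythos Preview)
Your proof is correct and arrives at the same rate, but the route differs from the paper's. After the finite-termination step (which you handle exactly as the paper does), the paper recognises the linearised map as the standard Douglas--Rachford operator $\bId + 2\proj_{\bcC}\proj_{\bcS} - \proj_{\bcC} - \proj_{\bcS}$ for the two \emph{subspaces} $\mathrm{range}(\proj_{\bcC}) = \{0\}^4 \times \mathrm{range}(\proj_{C_5})$ and $\bcS$. It then invokes the Friedrichs-angle theory for DR on linear subspaces: the rate is $\cos\theta_F$, which is read off from the nonzero singular values of $\proj_{\bcC}\proj_{\bcS}$. Because $\proj_{C_5}$ is a $0/1$ diagonal and the averaging map has the single singular value $\tfrac{1}{\sqrt{5}}$, the answer $\tfrac{\sqrt{5}}{5}$ drops out in one line.

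Your approach instead builds an explicit $T$-invariant decomposition $(V_1^5\oplus V_0^5)\cap(W_A\oplus W_B)$ and computes the spectrum block by block, isolating the universal $2\times2$ matrix with complex eigenvalues $\tfrac{1\pm2i}{5}$. This is more hands-on but entirely self-contained: you never need the Friedrichs-angle machinery or the results of \cite{bauschke2016optimal}, and the complex-eigenvalue structure (which the angle framework hides) is made visible. The paper's route, by contrast, is shorter and more modular; the same subspace identification is reused verbatim in the damped case (Proposition~\ref{prop:sudoku_ddr}), where a Friedrichs-angle argument alone would not suffice and one does end up doing a block-eigenvalue computation similar in spirit to yours. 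Your handling of the eigenvalue-$1$ directions via the global-convergence hypothesis is the correct substitute for what the paper absorbs into the $M\mapsto M-M^\infty$ formalism of Lemma~\ref{lem:rate-M}.
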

\begin{proof}
Denote $\bmx_{k+1} = \bcK(\xkp)$, from the updates of $\bmx_k$, we have that
\[
\bmx_{k+1} - \bmx^\star = \proj_{\bcS}(\bmz_{k} - \bmz^\star)
\]
with $\proj_{\bcS}
= \frac{1}{5} \mathbf{1}_{5\times5} \otimes \Id_{s^3\times s^3}   $, 
where $\mathbf{1}_{5\times5}$ stands for matrix of all $1$ and $\otimes$ for Kronecker product. 

The separability of $\proj_{\bcC}$ and the definition of projection operator lead to, for each $i=1,\ldots,5$
\[
\begin{aligned}
u_{i,k+1} &= \proj_{C_i} (2x_{k+1}-z_{i,k})  \\
\qandq
u_{i,k+1} - u_{i}^\star &= \proj_{C_i} (2x_{k+1}-z_{i,k}) - \proj_{C_i} (2x^\star-z_{i}^\star) .
\end{aligned}  
\]
Under the non-degeneracy condition \eqref{eq:ndc_Ci}, apply the argument of Theorem \ref{thm:local_dr} to obtain the finite convergence of $u_{i,k}$ for $i=1,\ldots,4$.  
For $C_5$, since its projection operator is linear, we have
\[
\begin{aligned}
u_{5,k+1} - u_{5}^\star 
&= \proj_{C_5} (2x_{k+1}-z_{5,k}) - \proj_{C_5} (2x^\star-z_{5}^\star) \\
&= 2\proj_{C_5} (x_{k+1}-x^\star) - \proj_{C_5} (z_{5,k}-z_{5}^\star) . 
\end{aligned}
\]
As a result, for $k$ large enough there holds
\[
\begin{aligned}
\bmu_{k+1} - \bmu^\star
&= 
2 \begin{bmatrix} \mathbf{0}_{4s^3\times4s^3} & \\ & \proj_{C_5} \end{bmatrix}
\proj_{\bcS}(\bmz_{k} - \bmz^\star) - \begin{bmatrix} \mathbf{0}_{4s^3\times4s^3} & \\ & \proj_{C_5} \end{bmatrix} (\bmz_{k} - \bmz^\star) . 
\end{aligned}
\]
Let $\proj_{\bcC} \eqdef \begin{bmatrix} \mathbf{0}_{4s^3\times4s^3} & \\ & \proj_{C_5} \end{bmatrix}$ and back to $\bmz_{k+1} - \bmz^\star$, we get
\[
\begin{aligned}
\bmz_{k+1} - \bmz^\star
&= (\bmz_{k} - \bmz^\star) + (\bmu_{k+1} - \bmu^\star) - (\bmx_{k+1} - \bmx^\star) \\
&= \Pa{ \bId + 2\proj_{\bcC} \proj_{\bcS} - \proj_{\bcC} - \proj_{\bcS} } (\bmz_{k} - \bmz^\star) ,
\end{aligned}
\] 
Since $\proj_{C_5}$ is the projection operator onto a subspace, so is $\proj_{\bcC}$. 
As a result, the linear convergence rate is the cosine of the Friedrichs angle $\theta_F$ between the subspace of $\proj_{\bcC}$ and that of $\proj_{\bcS}$. 
%
We now need to analyze the singular values of $\proj_{\bcC}\proj_{\bcS}$, which essentially is the SVD of $\proj_{C_5}\proj_{S}$ where
\[
\proj_{S}
= \sfrac{1}{5} \mathbf{1}_{1\times5} \otimes \Id_{s^3\times s^3}   .
\]
We have
\begin{itemize}
\item $\proj_{C_5}$ is diagonal matrix with only $0$ and $1$.
\item $\proj_{S}$ has a unique singular value which is $\frac{\sqrt{5}}{5}$.
\end{itemize}
As a result, $\proj_{C_5}\proj_{S}$ has only two singular values which are $0$ and $\frac{\sqrt{5}}{5}$. Hence we conclude the proof. 
\end{proof}

Next we present result for the damped Douglas--Rachford splitting \eqref{eq:ddr}.

\begin{proposition}[Local convergence of dDR]\label{prop:sudoku_ddr}
For the Sudoku puzzle \eqref{eq:sudoku_5set} and the damped Douglas--Rachford splitting \eqref{eq:ddr_msets}, suppose Assumptions \iref{A:C}-\iref{A:intersection} hold and \eqref{eq:ddr_msets} is ran under the conditions of Theorem~\ref{thm:global_convergence}, then $\pa{\bmu_k,\bmx_k, \bmz_k}$ converges to $(\bmx^\star,\bmx^\star, \bmz^\star)$ with $\bmz^\star$ being a fixed point and $\bmx^\star$ a stationary point of $\min_{\bmx} \left\{\dist^2(\bmx, \bcS)~~\mathrm{s.t.}~~\bmx\in\bcC\right\}$.
If, moreover, the non-degeneracy condition \eqref{eq:ndc_Ci} holds for $C_{1,\ldots,4}$, then for all $k$ large enough, it holds
\begin{itemize}
\item 
$u_{i,k} = \xsol$ for $i=1,\ldots,4$, 
\item 
$\norm{\bmz_k-\bmz^\star} = O(\eta^k)$ with $\eta = \frac{ {2\gamma + 5} + \sqrt{25 - 16\gamma^2} }{ 10(1+\gamma) }$. 
\end{itemize}
\end{proposition}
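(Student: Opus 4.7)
The plan is to mirror the proof of Proposition~\ref{prop:sudoku_dr}, but replacing the $\proj_{\bcS}$ step by the damped averaging $A \eqdef \tfrac{1}{1+\gamma}(\bId + \gamma\proj_{\bcS})$, and then carry out an explicit spectral calculation to recover the claimed closed form for $\eta$. Global convergence of $(\bmu_k, \bmx_k, \bmz_k)$ to $(\bmx^\star, \bmx^\star, \bmz^\star)$ follows from Theorem~\ref{thm:global_convergence} applied in the product space (compactness of $\bcC$ transfers directly). For the finite termination of $u_{i,k}$, $i=1,\ldots,4$, I would re-use verbatim the argument from the proof of Theorem~\ref{thm:local_ddr}: prox-regularity of $C_i$ at $x^\star$ together with $x^\star - z_i^\star \in \mathrm{int}(\normal{C_i}(x^\star))$ produces an open neighborhood $\calB_i$ of $2x^\star - z_i^\star$ satisfying $\proj_{C_i}(\calB_i) = \{x^\star\}$, and continuity of the iterates ensures $2x_{i,k+1} - z_{i,k} \in \calB_i$ for all $k$ large enough.

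For $k$ large enough so that $u_{i,k+1} = x^\star$ for $i=1,\ldots,4$, I then linearize the error recursion. Affinity of $C_5$ gives $\proj_{C_5}(y) - \proj_{C_5}(y') = P(y - y')$ for a diagonal $0/1$ matrix $P$ (zero on pinned cells and one on free cells), exactly as used in Proposition~\ref{prop:sudoku_dr}. Letting $\bcP \eqdef \mathrm{diag}(0,0,0,0,P)$, the three updates of \eqref{eq:ddr_msets} yield
\[
\bmx_{k+1} - \bmx^\star = A(\bmz_k - \bmz^\star), \qquad \bmu_{k+1} - \bmu^\star = 2\bcP A(\bmz_k - \bmz^\star) - \bcP(\bmz_k - \bmz^\star) ,
\]
and combining with $\bmz_{k+1} - \bmz^\star = (\bmz_k - \bmz^\star) + (\bmu_{k+1} - \bmu^\star) - (\bmx_{k+1} - \bmx^\star)$ produces $\bmz_{k+1} - \bmz^\star = M (\bmz_k - \bmz^\star)$ with iteration matrix $M \eqdef \bId - \bcP - A + 2\bcP A$.

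The main task is then to compute $\rho(M)$. Since both $\proj_{\bcS}$ and $\bcP$ are diagonal after separating the $s^3$ coordinates of the lifted cube, $M$ is block-diagonal with one $5\times 5$ block per coordinate $j$. On a pinned coordinate ($P_{jj}=0$) the block reduces to $\tfrac{\gamma}{1+\gamma}(\Id_5 - \tfrac{1}{5}\mathbf{1}\mathbf{1}^\top)$, contributing the spectrum $\{0, \tfrac{\gamma}{1+\gamma}\}$. On a free coordinate ($P_{jj}=1$), with $e = (0,0,0,0,1)^\top$, the subspace $\{\mathbf{1}, e\}^\perp$ is invariant and $M$ acts on it as $\tfrac{\gamma}{1+\gamma} \Id$ (multiplicity three), while the remaining invariant $2$-dimensional subspace $\mathrm{span}\{\mathbf{1}, e\}$ carries the characteristic polynomial
\[
\lambda^2 - \tfrac{5+2\gamma}{5(1+\gamma)} \lambda + \tfrac{\gamma}{5(1+\gamma)} = 0 ,
\]
read off from the basis-invariant trace and determinant of the restriction of $M$ to this subspace. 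Its discriminant simplifies to $(25 - 16\gamma^2)/(25(1+\gamma)^2)$, and the larger root is exactly $\eta = \tfrac{2\gamma + 5 + \sqrt{25-16\gamma^2}}{10(1+\gamma)}$. A direct comparison gives $\eta > \tfrac{\gamma}{1+\gamma}$ throughout $\gamma \in \,]0,\sqrt{3/2}-1[$, so $\rho(M) = \eta$; since the discriminant is strictly positive on this range, $\eta$ is a simple root and hence semi-simple, and Lemma~\ref{lem:rate-M}(iii)(b) delivers the sharp rate $\|\bmz_k - \bmz^\star\| = O(\eta^k)$.

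The step I expect to be the main source of difficulty is the $2\times 2$ reduction on $\mathrm{span}\{\mathbf{1}, e\}$: because $\mathbf{1}$ and $e$ are not orthogonal, the $2 \times 2$ matrix representing $M$ in that basis is not symmetric and sign errors are easy to make. The cleanest route is to compute the action of $M$ on $\mathbf{1}$ and on $e$ directly and then read off trace and determinant, since these two quantities are basis-invariant and produce the discriminant $25 - 16\gamma^2$ without any need to diagonalize.
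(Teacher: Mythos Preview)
Your argument is correct and reaches the same conclusion as the paper, but via a genuinely different spectral reduction. The paper does not exploit the coordinate-wise $5\times5$ block structure; instead it invokes the simultaneous block-diagonalisation of two orthogonal projections from \cite{Bauschke2016}, writing $\proj_{\bcC}$ and $\proj_{\bcS}$ in terms of the principal angles $(\theta_i)$, and then specialises using $\cos\theta_i=\tfrac{\sqrt{5}}{5}$ (already established in Proposition~\ref{prop:sudoku_dr}) to obtain the quadratic $\lambda_i=\tfrac{2\gamma\alpha_i^2+1\pm\sqrt{1-4\gamma^2\alpha_i^2\beta_i^2}}{2(1+\gamma)}$, which simplifies to your $\eta$. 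Your route is more elementary and self-contained for this particular problem: the reduction to $5\times5$ blocks and then to the $2$-plane $\mathrm{span}\{\mathbf{1},e\}$ avoids the external lemma entirely and makes the appearance of the factor $5$ transparent. The paper's route, on the other hand, would transfer with no change to any affine $C_5$ with a different angle structure, and it recycles the principal-angle computation from the undamped case rather than redoing it.

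One small wording point: when you say ``$\eta$ is a simple root and hence semi-simple'', note that $\eta$ is simple only within each $2\times2$ block; as an eigenvalue of $M$ its algebraic multiplicity equals the number of free coordinates. The conclusion is still correct, because the block-diagonal structure with each block diagonalisable (distinct roots on the $2$-plane, scalar on its complement) forces $M$ itself to be diagonalisable, hence every eigenvalue is semi-simple. The paper establishes semi-simplicity differently, by a direct rank computation $\rank(M_p-\eta\Id)=\rank((M_p-\eta\Id)^2)=p$ on the $2p\times2p$ top-left block.
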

\begin{proof}
From the updates of $\bmx_{k+1}$, we have that
\[
\begin{aligned}
\bmx_{k+1} - \bmx^\star 
&= \sfrac{1}{1+\gamma}\Pa{ \bmz_{k} + \gamma \proj_{\bcS}(\bmz_{k}) } - \sfrac{1}{1+\gamma}\Pa{ \bmz^\star + \gamma \proj_{bcS}(\bmz^\star) } \\
&= \sfrac{1}{1+\gamma} \pa{\bmz_{k}-\bmz^\star} +  \sfrac{\gamma}{1+\gamma}  \proj_{\bcS}(\bmz_{k} - \bmz^\star)
\end{aligned}
\]
with $\proj_{\bcS} = \sfrac{1}{5} \mathbf{1}_{5\times5} \otimes \Id_{s^3\times s^3}   $. 
For $\bmu_{k}$, the finite termination of $u_{i,k},~i=1,\ldots,4$ follows from the proof of Proposition \ref{prop:sudoku_dr}. 
For $C_5$, again we have
\[
u_{5,k+1} - u_{5}^\star = 2\proj_{C_5} (x_{5,k+1}-x_{5}^\star) - \proj_{C_5} (z_{5,k}-z_{5}^\star)  . 
\]
Let $\proj_{\bcC} \eqdef \begin{bmatrix} \mathbf{0}_{4s^3\times4s^3} & \\ & \proj_{C_5} \end{bmatrix}$, then %
\[
\begin{aligned}
\bmu_{k+1} - \bmu^\star
&= 2\proj_{\bcC} (\bmx_{k+1} - \bmx^\star) - \proj_{\bcC} (\bmz_{k} - \bmz^\star) \\ 
&= \sfrac{2}{1+\gamma} \proj_{\bcC} \pa{\bmz_{k}-\bmz^\star} +  \sfrac{2\gamma}{1+\gamma}  \proj_{\bcC} \proj_{\bcS}(\bmz_{k} - \bmz^\star) - \proj_{\bcC} (\bmz_{k} - \bmz^\star)\\
&= \sfrac{1-\gamma}{1+\gamma} \proj_{\bcC} \pa{\bmz_{k}-\bmz^\star} +  \sfrac{2\gamma}{1+\gamma}  \proj_{\bcC} \proj_{\bcS}(\bmz_{k} - \bmz^\star)  .
\end{aligned}
\]
Back to $\bmz_{k+1} - \bmz^\star$, we get
\[
\begin{aligned}
\bmz_{k+1} - \bmz^\star
&= (\bmz_{k} - \bmz^\star) + (\bmu_{k+1} - \bmu^\star) - (\bmx_{k+1} - \bmx^\star) \\
&= (\bmz_{k} - \bmz^\star) + \sfrac{1-\gamma}{1+\gamma} \proj_{\bcC} \pa{\bmz_{k}-\bmz^\star} +  \sfrac{2\gamma}{1+\gamma}  \proj_{\bcC} \proj_{\bcS}(\bmz_{k} - \bmz^\star) \\&\qquad  - \sfrac{1}{1+\gamma} \pa{\bmz_{k}-\bmz^\star} -  \sfrac{\gamma}{1+\gamma}  \proj_{\bcS}(\bmz_{k} - \bmz^\star) \\
&= \sfrac{1}{1+\gamma} \Pa{ {\gamma} \bId + {2\gamma} \proj_{\bcC} \proj_{\bcS} + \pa{1-\gamma} \proj_{\bcC} - {\gamma}  \proj_{\bcS} } (\bmz_{k} - \bmz^\star)  \\
&= \sfrac{1}{1+\gamma} \Pa{ \gamma \pa{ \bId + 2 \proj_{\bcC} \proj_{\bcS} - \proj_{\bcC} - \proj_{\bcS} } + \proj_{\bcC} } (\bmz_{k} - \bmz^\star)  .
\end{aligned}
\]
Denote $M_{\gamma} = \frac{1}{1+\gamma} \pa{ \gamma \pa{ \bId + 2 \proj_{\bcC} \proj_{\bcS} - \proj_{\bcC} - \proj_{\bcS} } + \proj_{\bcC} }$. 
Let $p,q$ be the rank of $\proj_{\bcC}$ and $\proj_{\bcS}$ respectively, also assume $p\leq q$ ({For the case $p\geq q$, similar result can be obtained}). Based on \cite{Bauschke2016}, there exists an orthogonal matrix $U$ such that
\[
\proj_{\bcC} =
U
\left[
\begin{array}{cc|cc}
\Id_{p} & 0 & 0 & 0\\
0  &  0_{p}  & 0 & 0\\
\hline
0 & 0 & 0_{q-p} & 0\\
0 & 0 & 0 & 0_{n-p-q}
\end{array}
\right]
U^*  \\
\qandq
\proj_{\bcS} =
U
\left[
\begin{array}{cc|cc}
\alpha^2 & \alpha \beta & 0 & 0\\
\alpha \beta  &  \beta^2  & 0 & 0\\
\hline
0 & 0 & \Id_{q-p} & 0\\
0 & 0 & 0 & 0_{n-p-q}
\end{array}
\right]
U^*  ,
\]
where $\alpha = \diag(\cos(\theta_1),\ldots,\cos(\theta_{p}))$ and $\beta = \diag(\sin(\theta_1),\ldots,\sin(\theta_{p}))$ with $\theta_{i, i=1,..p}$ being the principal angles between the subspaces of $\proj_{\bcC}$ and $\proj_{\bcS}$. 
Consequently,
\[
\bId + 2 \proj_{\bcC} \proj_{\bcS} - \proj_{\bcC} - \proj_{\bcS}
=
U
\left[
\begin{array}{cc|cc}
\alpha^2 & \alpha \beta & 0 & 0\\
- \alpha \beta  &  \alpha^2  & 0 & 0\\
\hline
0 & 0 & 0_{q-p} & 0\\
0 & 0 & 0 & \Id_{n-p-q}
\end{array}
\right]
U^*   .
\]
Therefore, we have
\[
M_{\gamma}
= 
\sfrac{1}{1+\gamma}
U
\left[
\begin{array}{cc|cc}
\gamma \alpha^2 + \Id_{p} & \gamma \alpha \beta & 0 & 0\\
- \gamma \alpha \beta  &  \gamma \alpha^2  & 0 & 0\\
\hline
0 & 0 & 0_{q-p} & 0\\
0 & 0 & 0 & \gamma \Id_{n-p-q}
\end{array}
\right]
U^*
\]
Clearly, $0$ and $\frac{\gamma}{1+\gamma}$ are two eigenvalues of the matrix. For the top left block of the above matrix, as it is block diagonal, we have the following characteristic polynomial
\[
0 = \prod_{i=1}^{p} \bPa{ \Pa{ \tfrac{\gamma \alpha_i^2 + 1}{1+\gamma} - \lambda }\Pa{ \tfrac{\gamma \alpha_i^2}{1+\gamma} - \lambda } + \tfrac{\gamma^2 \alpha_i^2\beta_i^2}{(1+\gamma)^2} } .
\]
Solving the quadratic equation for each $i$ we get
\[
	\lambda_i = \qfrac{ {2\gamma \alpha_i^2 + 1} \pm \sqrt{1 - 4\gamma^2 \alpha_i^2 \beta_i^2} }{ 2(1+\gamma) }  .
\]
As in the proof of Proposition \ref{prop:sudoku_dr}, we have that $\alpha_i = \frac{\sqrt{5}}{5}$ for all $i=1,\ldots,p$, therefore  $M_{\gamma}$ has only $4$ distinct eigenvalues which are
\[
0, \quad 
\sfrac{ {2\gamma + 5} - \sqrt{25 - 16\gamma^2} }{ 10(1+\gamma) } , \quad
\sfrac{\gamma}{1+\gamma} \qandq
\sfrac{ {2\gamma + 5} + \sqrt{25 - 16\gamma^2} }{ 10(1+\gamma) }  .
\]
We also have
\[
\begin{aligned}
\sfrac{\gamma}{1+\gamma} \leq \sfrac{ {2\gamma + 5} + \sqrt{25 - 16\gamma^2} }{ 10(1+\gamma) } &: \gamma \in ]0, 1] , \\
\sfrac{\gamma}{1+\gamma} \geq \abs{ \sfrac{ {2\gamma + 5} + \sqrt{25 - 16\gamma^2} }{ 10(1+\gamma) } } &: \gamma \in ]1, +\infty[  \\
\end{aligned}
\]
Next we focus on $\gamma \in ]0, 1]$ and show $\eta = \frac{ {2\gamma + 5} + \sqrt{25 - 16\gamma^2} }{ 10(1+\gamma) }$ is the convergence rate, to this end, we need to show $\eta$ is semi-simple. Let $M_{p} = \begin{bmatrix} \gamma \alpha^2 + \Id_{p} & \gamma \alpha \beta \\ - \gamma \alpha \beta  &  \gamma \alpha^2 \end{bmatrix}$, since $\alpha = \frac{\sqrt{5}}{5}$ is a $p$'th order root, we can simplify $M_p$ as:
\[
M_p = 
\sfrac{1}{5}
\begin{bmatrix} \pa{\gamma+5} \Id_{p} & {2\gamma} \Id_{p} \\ - {2\gamma} \Id_{p}  &  {\gamma} \Id_{p} \end{bmatrix}
\] 
As a result, we have
\[
\begin{aligned}
\rank\pa{M_{p} - \eta \Id_{2p}}
&= \rank\Ppa{ \small
\begin{bmatrix} \pa{\gamma+5-5\eta} \Id_{p} & {2\gamma} \Id_{p} \\ - {2\gamma} \Id_{p}  &  \pa{\gamma-5\eta} \Id_{p} \end{bmatrix} }
= p , \qandq \\
\rank\Pa{\pa{M_{p} - \eta \Id_{2p}}^2}
&= \rank\Ppa{ \small
\begin{bmatrix} (\pa{\gamma+5-5\eta}^2-4\gamma^2) \Id_{p} & {2\gamma}(\gamma+5-10\eta) \Id_{p} \\ - {2\gamma}(\gamma+5-10\eta) \Id_{p}  &  (\pa{\gamma-5\eta}^2-4\gamma^2) \Id_{p} \end{bmatrix} }
= p  ,
\end{aligned}
\]
which means $\eta$ is semi-simple by Definition \ref{def:semi_simple}, and we conclude the linear rate of convergence. 
\end{proof}

\begin{remark}
The proofs of the two propositions above is dimension independent, which means the results hold true for all puzzle sizes of perfect squares $s$ with $s \geq 4$. See Section \ref{sec:experiment} for numerical illustrations. 
\end{remark}

\subsection{$s$-queens puzzle}

The rule of eight queens puzzle is rather simple: placing eight chess queens on an $8\times8$ chessboard so that no two queens threaten each other. The size of puzzle can be generalized to any size $s\times s$ with $s \geq 4$, while there is no solution for $s=2,3$ and a trivial solution for $s=1$ which is obvious\footnote{\url{https://en.wikipedia.org/wiki/Eight_queens_puzzle}}.

We follow the setting of \cite{schaad2010modeling}. 
On the chessboard, as there are four directions (horizontal, vertical and two diagonal directions) for the queen to move, we have four constraint sets for the problem: 
\begin{itemize}
	\item $C_1$: each {\it row} has only one queen.
	\item $C_2$: each {\it column} has only one queen.
	\item $C_3$: each diagonal direction {\it southeast-northwest}, there is at most one queen.
	\item $C_4$: each diagonal direction {\it southwest-northeast}, there is at most one queen.
\end{itemize} 
Now we can formulate the $s$-queens puzzle as a feasibility problem of four sets
\beq\label{eq:queens_4set}
\find ~~x~ \in \bbR^{s\times s} \enskip \st \enskip  x \in C_1 \cap C_2 \cap C_3 \cap C_4 .
\eeq
Since all the sets above are binary, so is the set $\bcC \eqdef C_{1} \times \dotsm \times C_{4}$, as a result finite convergence can be obtained under the conditions of Theorems \ref{thm:local_dr} and \ref{thm:local_ddr}, for the standard Douglas--Rachford and the damped one, respectively.

\section{Numerical results} \label{sec:experiment}

We now provide numerical results on Sudoku and $s$-queens puzzles to support our theoretical findings. 
Before analyzing the convergence rates, we first compare the performance of the standard Douglas--Rachford splitting method \eqref{eq:dr} and the damped one \eqref{eq:ddr}, on how successful are they when applied to solve these two puzzles. \ie how often each method finds a feasible point.

The comparison is shown in Table~\ref{tb:cmp}. For \eqref{eq:ddr}, two choices of $\gamma$ are considered: $\gamma = \frac{1}{5} \in ]0, \sqrt{3/2}-1[$ suggested by Lemma \ref{thm:global_convergence} and $\gamma = 99$ with online tracking rule suggested in \cite[Remark 4]{li2016douglas}. For both methods, the iteration is terminated if either a stopping criterion is met or $10^4$ steps of iteration are reached, then we verify the output. Also, a minimal $100$ number of iteration is set. For a given puzzle, each method is repeated $10^3$ times with different initialization for each running.

For Sudoku, the size of both puzzles are $9\times9$: ``Puzzle 1'' is provided with $37$ digits, hence is easy; ``Puzzle 2'' has $22$ given digits and is more difficult than ``Puzzle 1''\footnote{For $9\times9$ Sudoku, to ensure the uniqueness of solution, the smallest number of given digits of a puzzle is $17$. See \url{https://www.technologyreview.com/2012/01/06/188520/mathematicians-solve-minimum-sudoku-problem/}}   
\begin{itemize}
\item The standard DR solves both puzzles with $100\%$ success rate, while dDR with $\gamma = \frac{1}{5}$ fails all tests. dDR with $\gamma = 99$ succeeds on ``Puzzle 1'' and the rate drops to about $88\%$ for ``Puzzle 2''.

\item In terms of number of iteration, sDR needs much less number of iterations compared to those of dDR with $\gamma = 99$. 
\end{itemize}
For $s$-queens puzzle, two different sizes are considered: $s=8$ for ``Puzzle 1'' and $s=16$ for ``Puzzle 2''. 
\begin{itemize}
\item Similar to Sudoku, dDR with $\gamma = \frac{1}{5}$ fails all tests. This time, between sDR and dDR with $\gamma=99$, neither achieves $100\%$ success rate with dDR being better than sDR. 

\item In terms of number of iteration, same as Sudoku case, sDR is better. 
\end{itemize}
The above observation, in particular the failure of dDR with $\gamma = \frac{1}{5}$, is in contrast to Example \ref{exp:circle_line}. 
One possible reason leads to the failure of dDR with $\gamma = \frac{1}{5}$, is that the set $C$ is finite and dDR can not escape bad local stationary point with small value of $\gamma$.

\renewcommand{\arraystretch}{1.1}

\begin{table}[!h]
\centering
\caption{Comparison of success rate of standard DR and damped DR for solving Sudoku and $s$-queens puzzles over 1,000 random initializations.}
\label{tb:cmp}
\begin{tabular}{|c|c|c|c|c|c|c|c|}
\hline
\multicolumn{2}{|c|}{\multirow{2}{*}{}} & \multicolumn{3}{c|}{Puzzle 1} & \multicolumn{3}{c|}{Puzzle 2} \\ \cline{3-8} 
\multicolumn{2}{|c|}{}                  &   sDR    &    dDR $\gamma = \frac{1}{5}$   &   dDR $\gamma = 99$    &   sDR   &   dDR $\gamma = \frac{1}{5}$   &   dDR $\gamma = 99$    \\ \hline
\multirow{2}{*}{Sudoku}           &  success rate    &   100\%    &    0   &    100\%   &   100\%    &   0    &   89.7\%   \\ \cline{2-8} 
                            &   avg. \# of itr.     &    114   &   184    &   2710    &   408    &   184    &   5409    \\ \hline
\multirow{2}{*}{$s$-queens}           &  success rate   &    94.8\%   &   0    &   98.0\%    &    90.2\%   &   0    & 92.2\%       \\ \cline{2-8} 
                            &   avg. \# of itr.   &  653  &  100  &  2812   &   1286  &  100   &  3618   \\ \hline
\end{tabular}
\end{table}

\renewcommand{\arraystretch}{1.0}


\subsection{Sudoku puzzle}\label{sec:exp_sudoku}

We consider three different puzzle sizes for Sudoku to verify out results: $4$, $9$ and $16$, which are shown in Figure~\ref{fig:puzzle_S} (a)-(c). In each size, we have $4$, $32$, and $128$ coefficients provided respectively.
The convergence behavior of standard Douglas--Rachford splitting method can be seen in the second and third rows of Figure \ref{fig:puzzle_S}, from which we observe that for all puzzles,
\begin{itemize}
	\item {\it Finite termination} of $u_{i,k}, i=1,\ldots,4$: in the second row of Figure \ref{fig:puzzle_S}, we provide the $\ell_0$ pseudo-norms of $\norm{u_{i,k}-u_{i}^\star}_{0}, i=1,\ldots,4$ to show the mismatch between $u_{i,k}$ and $u_{i}^\star$. We observed that, for each $i \in \ba{1,2,3,4}$, $\norm{u_{i,k}-u_{i}^\star}_{0}$ reaches $0$ in finite steps, which means the finite termination. 
	\item {\it Local linear convergence} In the last row of Figure \ref{fig:puzzle_S}, we provide the convergence behaviors of $\norm{\bmu_k-\bmu^\star}$ (which actually reduces to $\norm{u_{5,k}-u_{5}^\star}$), $\norm{\xk-\xsol}$ and $\norm{\bmz_k-\bmz^\star}$. Take $\norm{\bmz_k-\bmz^\star}$ for example, its convergence has two different regimes: {\it sub-linear} rate from the beginning, and {\it linear} rate locally. The {\it magenta} dashed line is our theoretical estimation of the linear convergence rate and the slope of the line is $\frac{\sqrt{5}}{5}$. 
\end{itemize}
For all three different puzzle sizes, the local linear convergence rate is $\frac{\sqrt{5}}{5}\approx0.45$, which confirms that the rate is independent of puzzle size.

\begin{figure}[!ht]
	\centering
	\subfloat[Size $4\times4$]{ \includegraphics[width=0.31\linewidth]{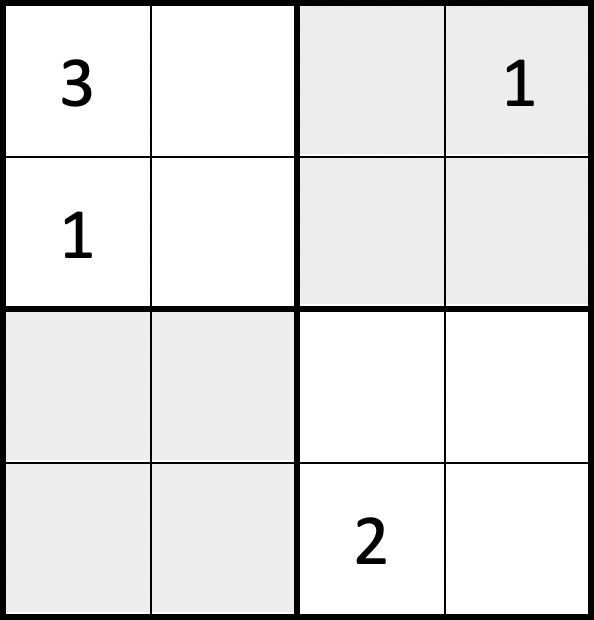} } \hspace{-2pt}
	\subfloat[Size $9\times9$]{ \includegraphics[width=0.312\linewidth]{puzzle_9.png} } \hspace{-2pt}
	\subfloat[Size $16\times16$]{ \includegraphics[width=0.314\linewidth]{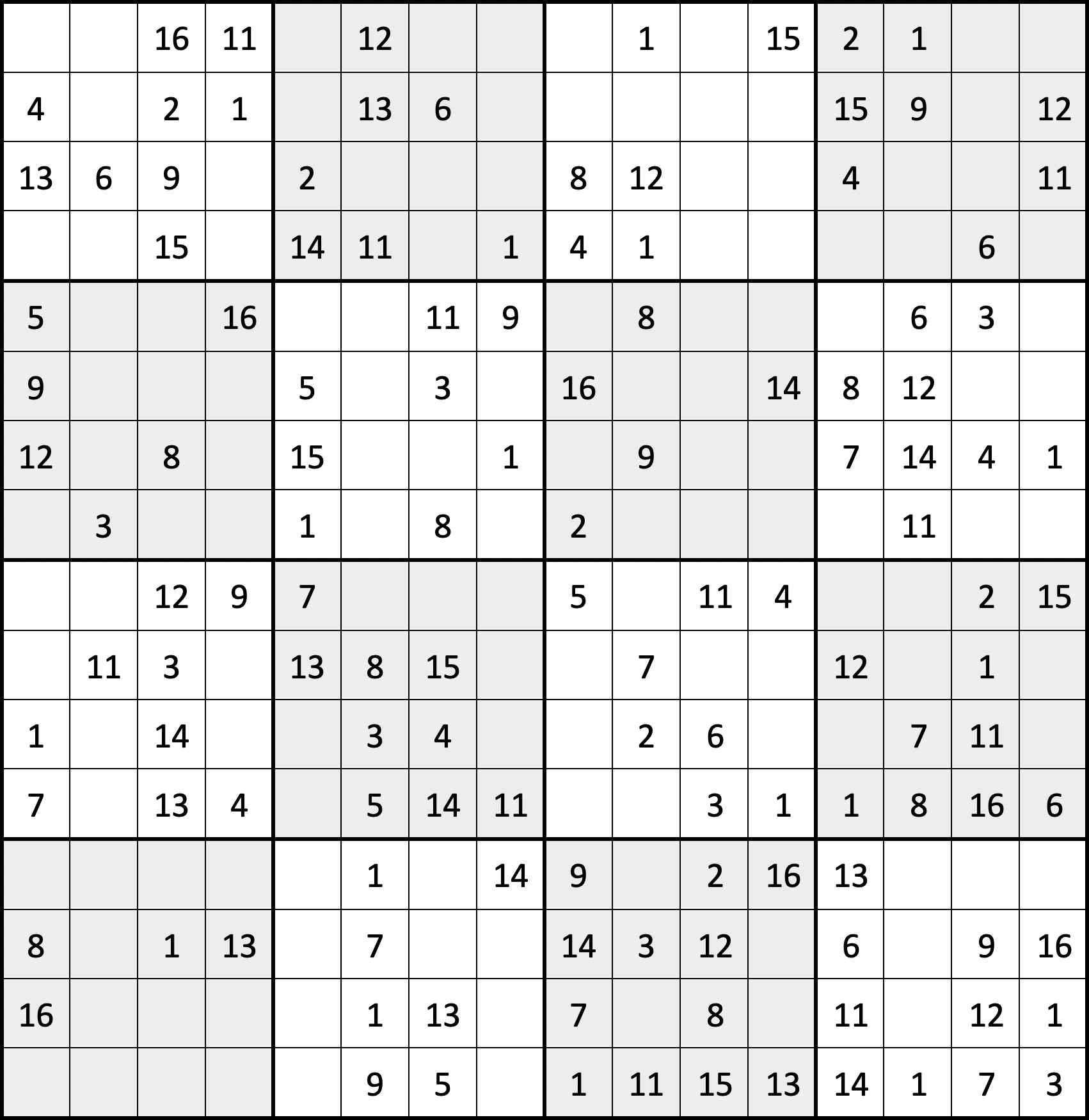} } \\[-2mm]
	\subfloat[Size $4$: convergence of $u_{i,k}$]{ \includegraphics[width=0.31\linewidth]{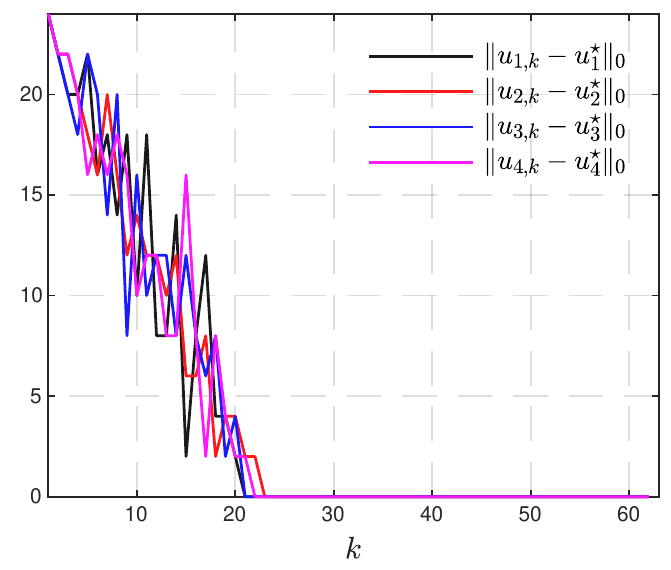} } \hspace{0pt}
	\subfloat[Size $9$: convergence of $u_{i,k}$]{ \includegraphics[width=0.31\linewidth]{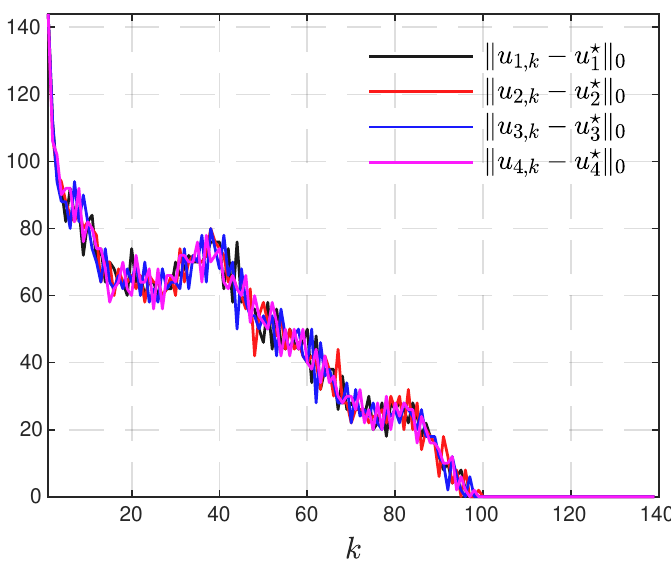} } \hspace{0pt}
	\subfloat[Size $16$: convergence of $u_{i,k}$]{ \includegraphics[width=0.31\linewidth]{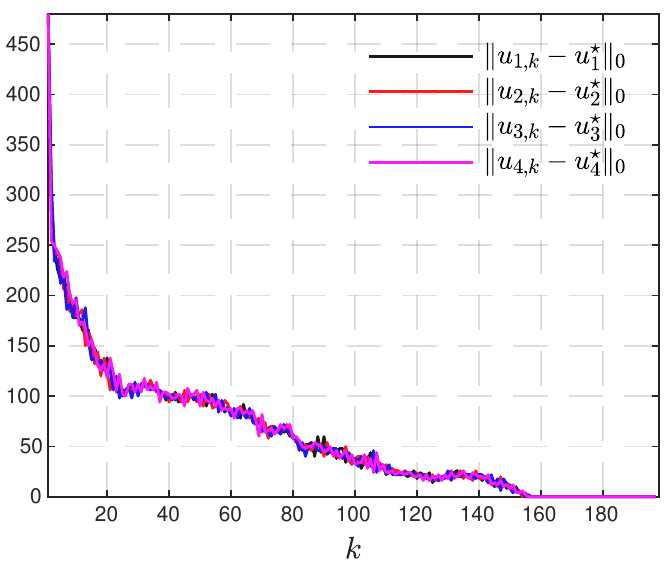} } \\[-2mm]
	\subfloat[Size $4$: convergence of $(\bmu_k,x_k,\bmz_k)$]{ \includegraphics[width=0.31\linewidth]{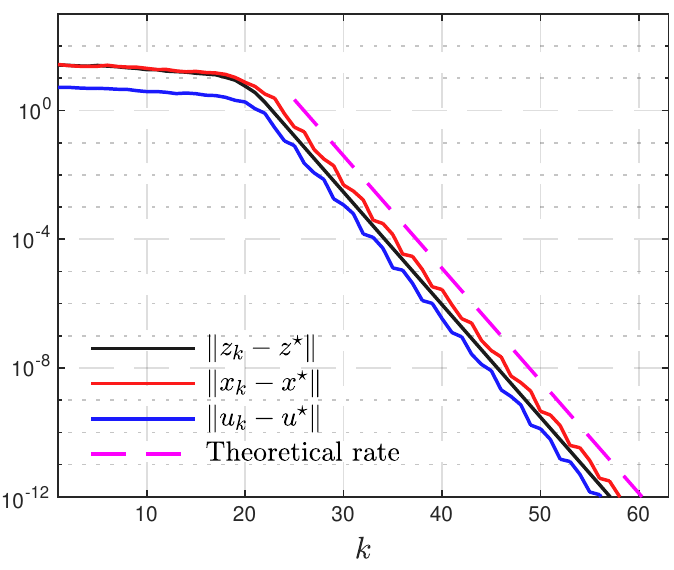} } \hspace{0pt}
	\subfloat[Size $9$: convergence of $(\bmu_k,x_k,\bmz_k)$]{ \includegraphics[width=0.31\linewidth]{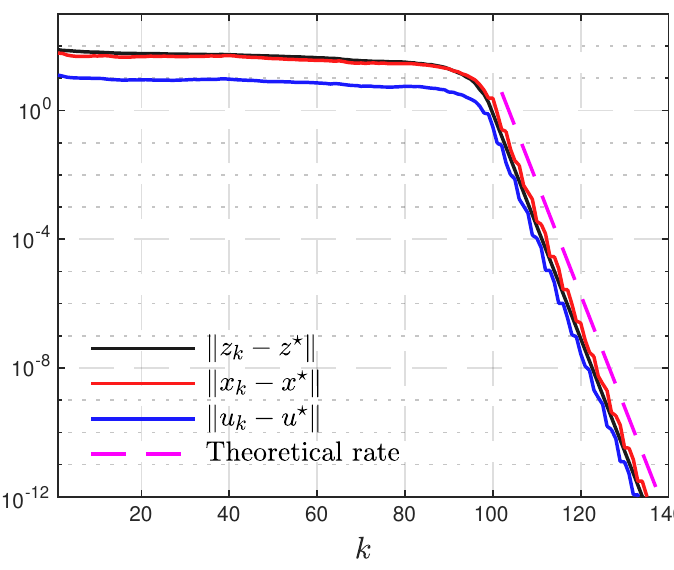} } \hspace{0pt}
	\subfloat[Size $16$: convergence of $(\bmu_k,x_k,\bmz_k)$]{ \includegraphics[width=0.31\linewidth]{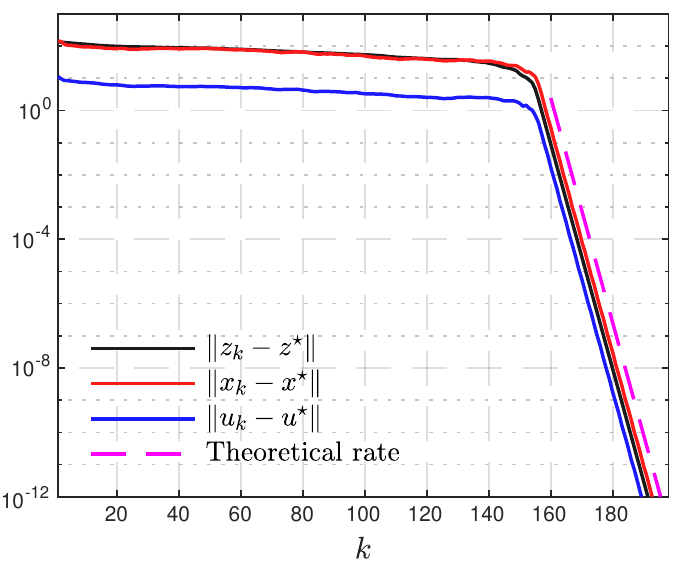} } \\
	\caption{Different sizes of Sudoku puzzles and convergence observations.} 
	\label{fig:puzzle_S}
\end{figure}

\subsection{$s$-queens puzzle}

For the $s$-queens puzzle, we also consider three different puzzle sizes: $s = 8, 16$ and $25$, which are shown in Figure \ref{fig:cmp_queens} (a)-(c). 
The convergence behaviors of the Douglas--Rachford splitting method are shown in the second row of Figure \ref{fig:cmp_queens}. Since all the constraint sets are binary, we observe finite convergence for the algorithm which complies with our theoretical results. 
%

\begin{figure}[!ht]
	\centering
	\subfloat[Size 8]{ \includegraphics[width=0.31\linewidth]{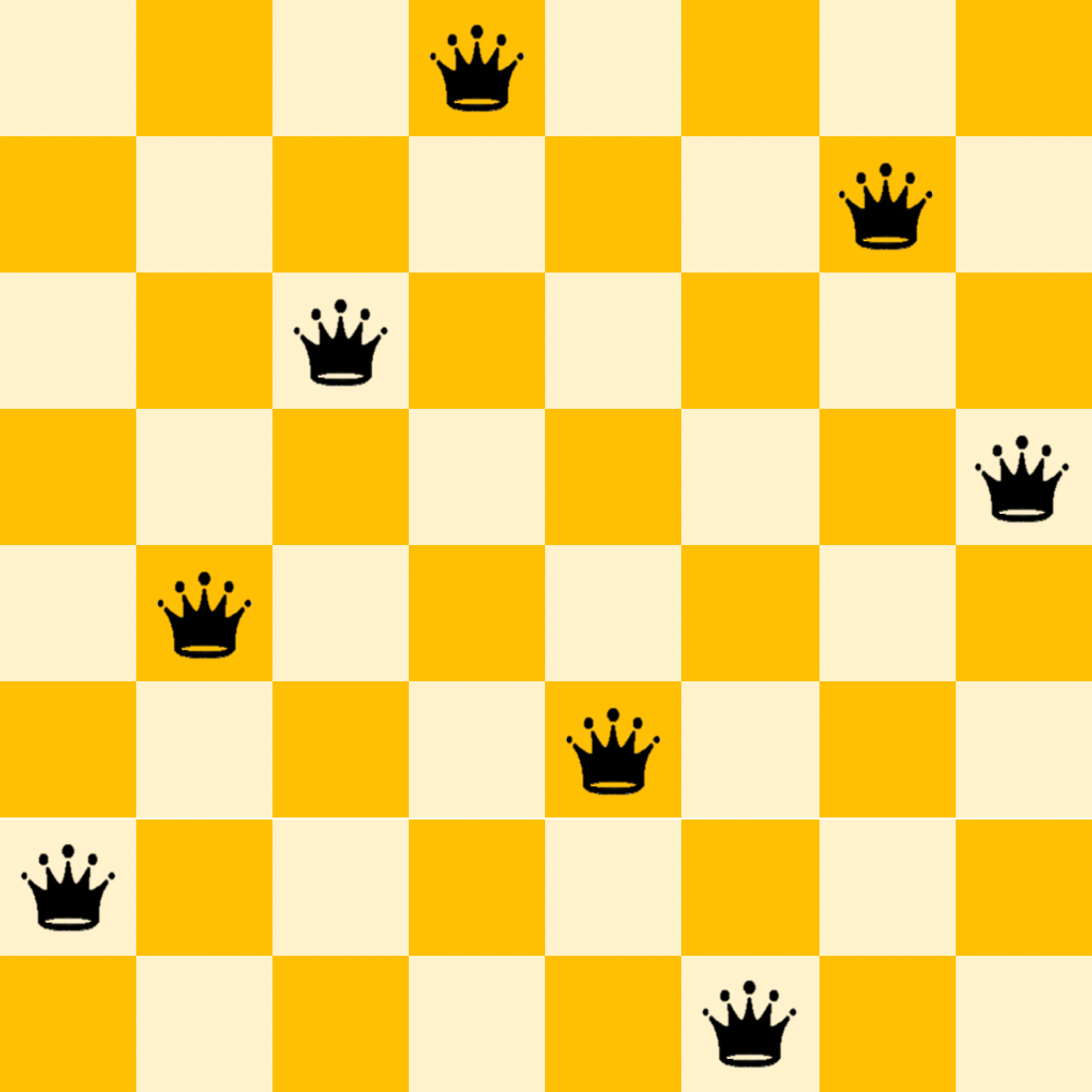} } \hspace{-2pt}
	\subfloat[Size 16]{ \includegraphics[width=0.31\linewidth]{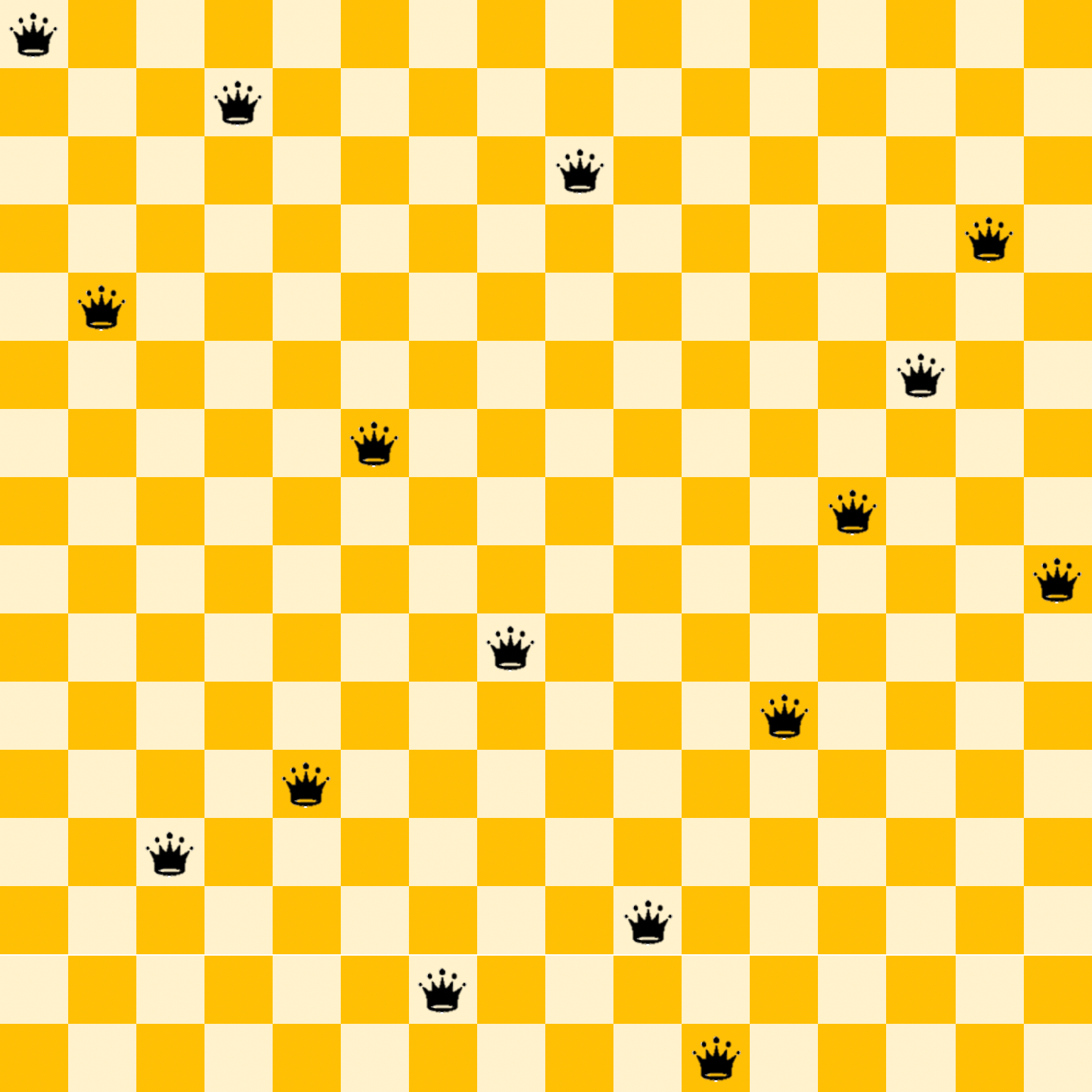} } \hspace{-2pt}
	\subfloat[Size 25]{ \includegraphics[width=0.322\linewidth]{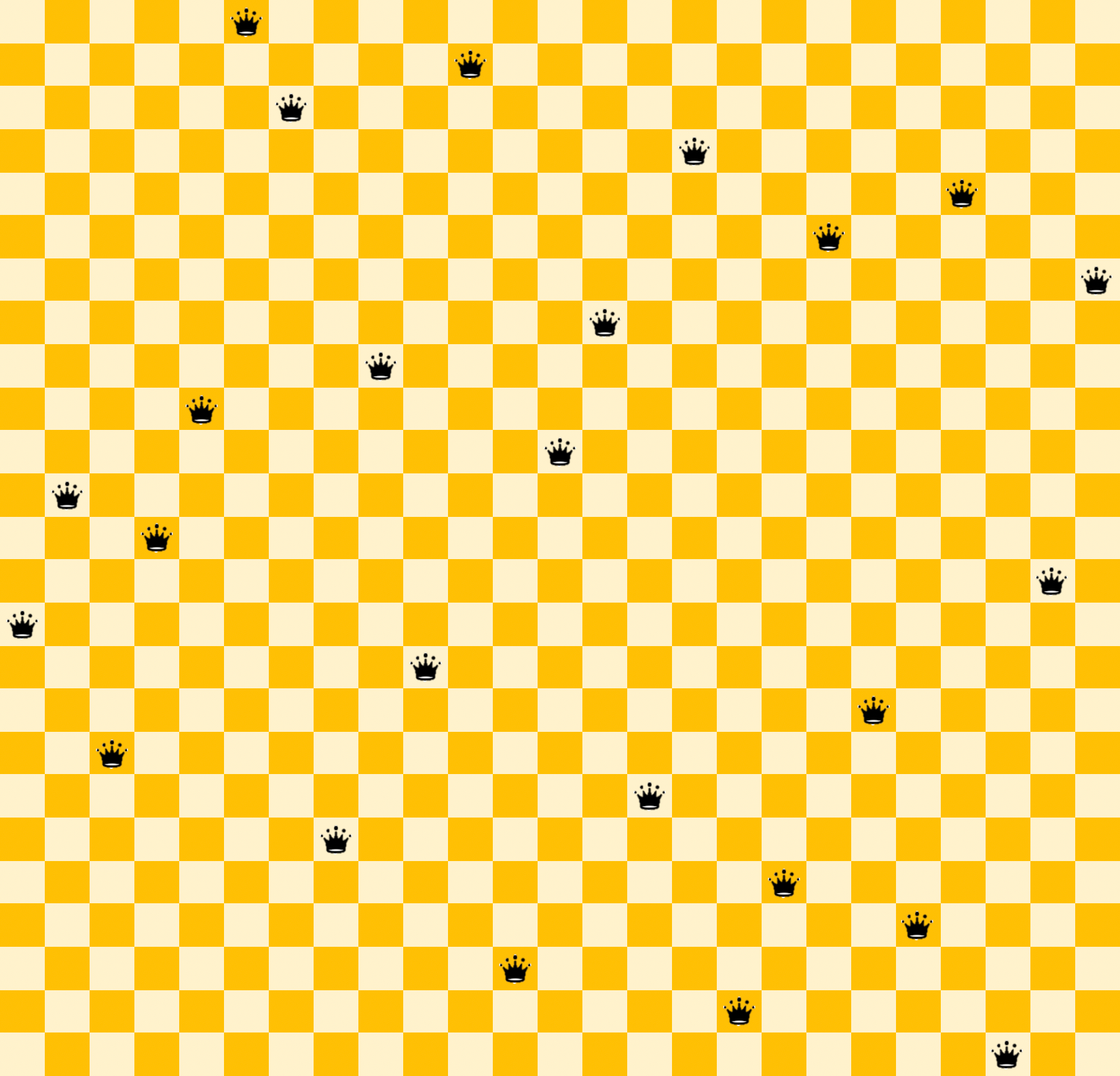} } \\
	\subfloat[Size 8]{ \includegraphics[width=0.31\linewidth]{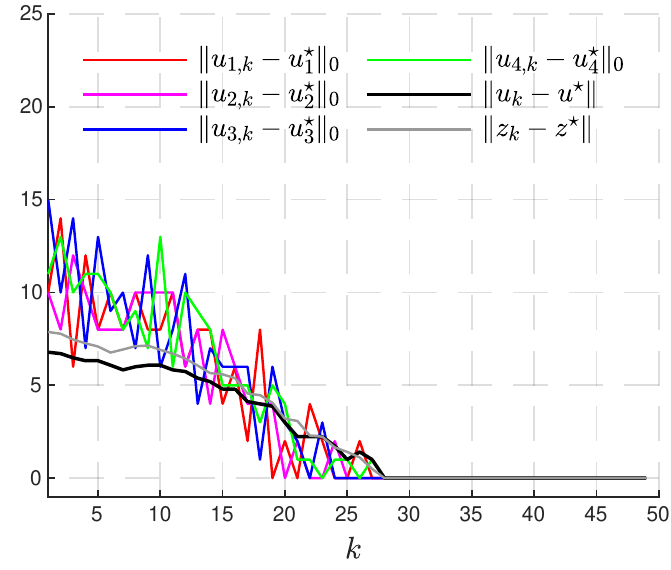} } \hspace{0pt}
	\subfloat[Size 16]{ \includegraphics[width=0.31\linewidth]{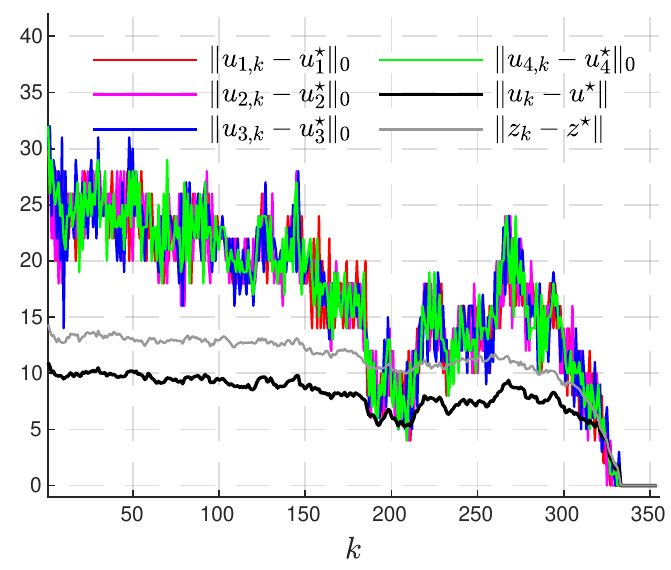} } \hspace{0pt}
	\subfloat[Size 25]{ \includegraphics[width=0.31\linewidth]{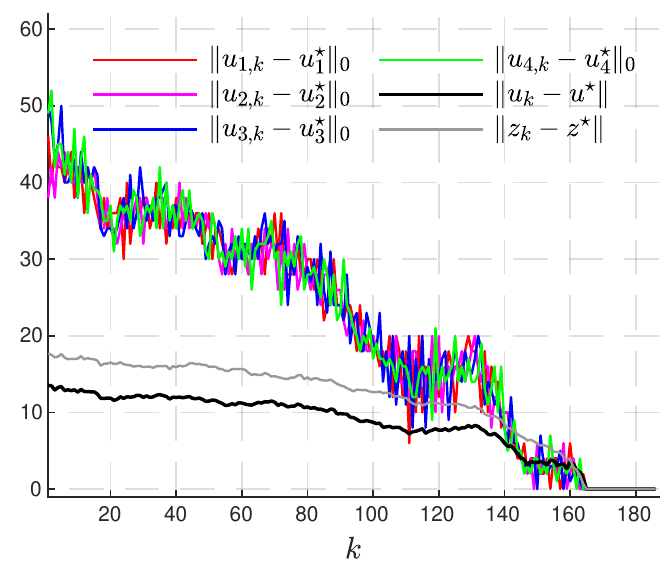} } \\
	\caption{Different sizes of queens puzzles and convergence observations.} 
	\label{fig:cmp_queens}
\end{figure}

\subsection{The damped Douglas--Rachford splitting}

We conclude our numerical experiments by showing the local linear convergence the damped Douglas--Rachford splitting method with $\gamma = 99$. 
The results on Sudoku puzzle of size $9\times9$ and eight queens puzzle of size $8\times8$ are shown below in Figure \ref{fig:cmp_ddr}. For both plots, the {\it magenta} line is our theoretical estimation of the local linear rate:
\begin{itemize}
\item For Sudoku puzzle, the slope of the magenta line is $\frac{ {2\gamma + 5} + \sqrt{25 - 16\gamma^2} }{ 10(1+\gamma) } \approx 0.86$.
\item For eight queens puzzle, the slope of the magenta line is $\frac{ \gamma }{ 1+\gamma } \approx 0.17$.
\end{itemize}
Again, our theoretical estimations are tight. 
We omit the plots of dDR with $\gamma = 99$ as they are very similar to those of Figure \ref{fig:cmp_ddr}, except different rates of local linear convergence.

\begin{figure}[!ht]
	\centering
	\subfloat[Sudoku puzzle]{ \includegraphics[width=0.4\linewidth]{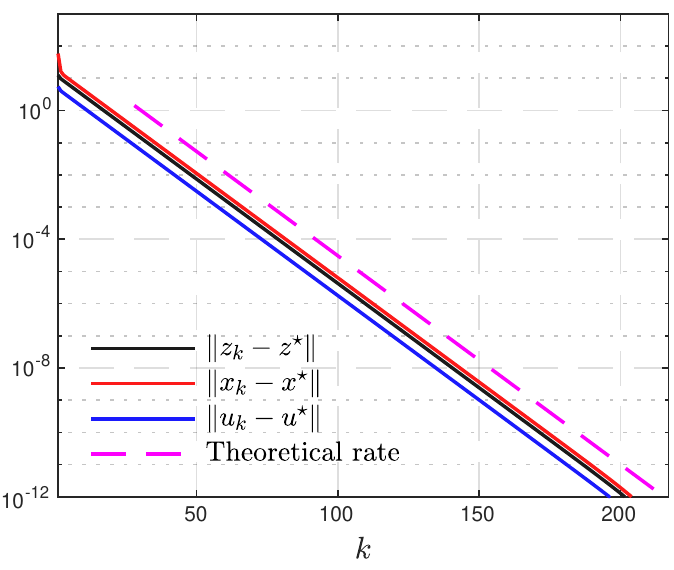} } \hspace{4pt}
	\subfloat[Eight queens puzzle]{ \includegraphics[width=0.4\linewidth]{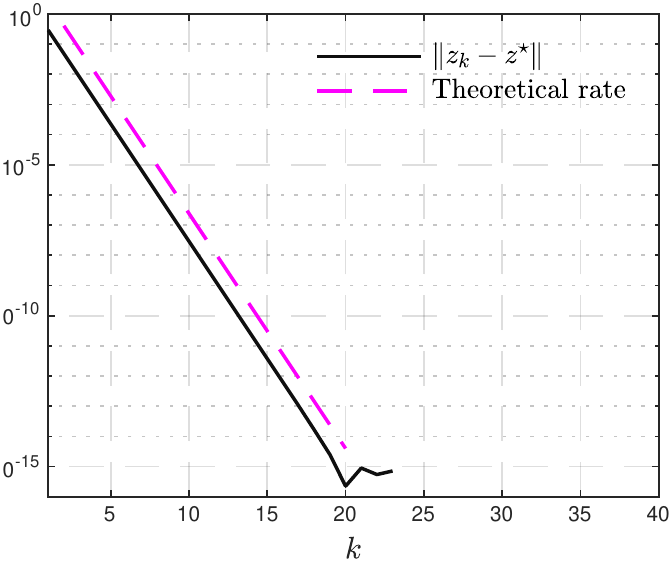} } \\
	\caption{Local linear convergence of the damped Douglas-Rachford for Sudoku puzzle and eight queens puzzle. Note again that convergence does not imply finding a solution to the feasibility problem.} 
	\label{fig:cmp_ddr}
\end{figure}

\section{Conclusions}

In this paper, we studied local convergence properties of Douglas--Rachford splitting method when applied to solve non-convex feasibility problems. 
Under a proper non-degeneracy condition, both finite convergence and local linear convergence are proved for the standard Douglas--Rachford splitting and a damped version of the method. 
Understanding when the methods fail, especially for the damped Douglas--Rachford splitting, require further study on the property of the methods.

\paragraph{Acknowledgement}
We would like to thank Guoyin Li for helpful discussions on the convergence of Douglas--Rachford splitting for non-convex optimization. 
J.L. was partly supported by Leverhulme trust, Newton trust and the EPSRC centre ``EP/N014588/1''. R.T. acknowledges funding from EPSRC Grant No. ``EP/L016516/1'' for the Cambridge Centre for Analysis. 
Both authors were supported by the Cantab Capital Institute for Mathematics of Information.

\begin{small}
\bibliographystyle{plain}
\bibliography{bib}
\end{small}

\end{document}